\newcolumntype{2}{D{.}{}{2.0}}
   \def\<{{\langle}} 
  \def\>{{\rangle}} 
  \def\la{{\triangleright}}
  \def\note#1{{}}
  \def\note#1{}
  \def\beq{\begin{equation}} 
  \def\eeq{\end{equation}}
  \def\id{\mathrm{id}}
  \newcounter{zlist} 
  \newenvironment{zlist}{\begin{list}{(\arabic{zlist})}{ 
  \usecounter{zlist}\leftmargin2.5em\labelwidth2em\labelsep0.5em 
  \topsep0.6ex 
  \parsep0.3ex plus0.2ex minus0.1ex}}{\end{list}}
  \newcounter{blist} 
  \newenvironment{blist}{\begin{list}{(\alph{blist})}{ 
  \usecounter{blist}\leftmargin2.5em\labelwidth2em\labelsep0.5em 
  \topsep0.6ex 
  \parsep0.3ex plus0.2ex minus0.1ex}}{\end{list}} 
  \newcounter{rlist} 
  \newenvironment{rlist}{\begin{list}{(\roman{rlist})}{ 
  \usecounter{rlist}\leftmargin2.5em\labelwidth2em\labelsep0.5em 
  \topsep0.6ex
  \parsep0.3ex plus0.2ex minus0.1ex}}{\end{list}}
\def\stac#1{\raise-.2cm\hbox{$\stackrel{\displaystyle\otimes}{\scriptscriptstyle{#1}}$}}
\def\cten#1{\raise-.2cm\hbox{$\stackrel{\displaystyle\reallywidehat{\otimes}}
{\scriptscriptstyle{#1}}$}}
  \def\Label#1{\label{#1}\ifmmode\llap{[#1] }\else 
  \marginpar{\smash{\hbox{\tiny [#1]}}}\fi} 
  \def\Label{\label}
  \newtheorem{proposition}{Proposition}[section]
  \newtheorem{lemma}[proposition]{Lemma} 
  \newtheorem{corollary}[proposition]{Corollary} 
  \newtheorem{theorem}[proposition]{Theorem} 
\theoremstyle{definition} 
  \newtheorem{definition}[proposition]{Definition}
  \newtheorem{example}[proposition]{Example}
  \theoremstyle{remark} 
  \newtheorem{remark}[proposition]{Remark}
  \newcounter{c} 
  \newcommand{\etyk}[1]{\vspace{-7.4mm}$$\begin{equation}\Label{#1} 
  \addtocounter{c}{1}} 
  \renewcommand{\]}{\ifnum \value{c}=1 $$\else \end{equation}\fi} 
   \numberwithin{equation}{section}
\def\CC{{\mathbb C}}
\def\FF{{\mathbb F}}
\def\NN{{\mathbb N}}
\def\RR{{\mathbb R}}
\def\aaa{{\mathfrak a}}
\def\bbb{{\mathfrak b}}
\def\ggg{{\mathfrak g}}
\def\hhh{{\mathfrak h}}
\def\lll{{\mathfrak l}}
\def\nnn{{\mathfrak n}}
\def\ooo{{\mathfrak o}}
\def\sss{{\mathfrak s}}
\newcommand{\Cc}{\mathcal{C}}
\def\*C{{}^*\hspace*{-1pt}{\Cc}}
\def\text#1{{\rm {\rm #1}}}
 \def\1{\mathbf{1}}
      \def\tr{\mathrm{Tr}\ }
\def\id{\mathrm{id}}
\def\1\mathbf{1}
\def\|#1{\overline{#1}}
\def\ad{\mathrm{ad}}
\def\Tan#1#2{T_{#1}{#2}}
\newcommand\reallywidehat[1]{%
\savestack{\tmpbox}{\stretchto{%
  \scaleto{%
    \scalerel*[\widthof{\ensuremath{#1}}]{\kern.1pt\mathchar"0362\kern.1pt}%
    {\rule{0ex}{\textheight}}
  }{\textheight}%
}{2.4ex}}%
\stackon[-6.9pt]{#1}{\tmpbox}%
}
\def\Lin{\mathrm{Lin}}
\begin{document}

\title{Lie affgebras vis-\`a-vis Lie algebras}

\author[Andruszkiewicz]{Ryszard R.\ Andruszkiewicz}

\address{
Faculty of Mathematics, University of Bia{\l}ystok, K.\ Cio{\l}kowskiego  1M,
15-245 Bia\-{\l}ys\-tok, Poland}

\email{randrusz@math.uwb.edu.pl}

\author[Brzeziński]{Tomasz Brzezi\'nski}

\address{
Department of Mathematics, Swansea University,
Fabian Way,
  Swansea SA1 8EN, U.K.\ \newline \indent
Faculty of Mathematics, University of Bia{\l}ystok, K.\ Cio{\l}kowskiego  1M,
15-245 Bia\-{\l}ys\-tok, Poland}

\email{T.Brzezinski@swansea.ac.uk}

\author[Radziszewski]{Krzysztof Radziszewski}

\address{Doctoral School of Exact and Natural Sciences \&
Faculty of Mathematics, University of Bia{\l}ystok, K.\ Cio{\l}kowskiego,
15-245 Bia\-{\l}ys\-tok, Poland}

\email{K.Radziszewski@uwb.edu.pl}

\subjclass[2010]{}

\keywords{Lie algebra; Lie affgebra; generalised derivation; quasicentroid}

\begin{abstract}
It is shown that any Lie affgebra, that is an algebraic system consisting of an affine space together with a bi-affine bracket satisfying affine versions of the antisymmetry and Jacobi identity, is isomorphic to a Lie algebra together with an element and a specific generalised derivation (in the sense of Leger and Luks, [G.F.\ Leger \& E.M.\ Luks, Generalized derivations of Lie algebras, {\em J.\ Algebra} {\bf 228} (2000), 165--203]). These Lie algebraic data can be taken for the construction of a Lie affgebra or, conversely, they can be uniquely derived for any Lie algebra fibre of the Lie affgebra. The close relationship between Lie affgebras and (enriched by the additional data) Lie algebras can be employed to attempt a classification of the former by the latter. In particular, up to isomorphism, a complex Lie affgebra with a simple Lie algebra fibre $\mathfrak{g}$ is fully determined by a scalar and an element of $\mathfrak{g}$ fixed up to an automorphism of $\mathfrak{g}$, and it can be universally embedded in a trivial extension of $\mathfrak{g}$ by a derivation. The study is illustrated by a number of examples that include all Lie affgebras with one-dimensional, nonabelian two-dimensional, $\mathfrak{s}\mathfrak{l}(2,\mathbb{C})$ and  $\mathfrak{s}\mathfrak{o}(3)$ fibres. Extensions of Lie affgebras by cocycles and their relation to cocycle extensions of tangent Lie algebras is  briefly discussed too.
\end{abstract}    
\date\today
\maketitle

\section{Introduction}
The study of vector space valued Lie brackets on affine spaces or {\em Lie affgebras} was initiated in \cite{GraGra:Lie} and developed and applied to the investigation of  differential geometry of bundles with affine fibres or {\em AV-geometry} \cite{GraGra:av1}, \cite{GraGra:av2} with an eye to the frame independent formulation of Lagrangian mechanics \cite{Tul:fra}, \cite{GraGra:fra}. A new point of view and extension of Lie affgebras in which no reference to a vector space is made or, indeed, no existence of the underlying vector space is presumed, was proposed in \cite{BrzPap:Lie}. In this approach a vector space is an artefact rather than a fundamental ingredient of an affine space in the sense that any point of an affine space determines a vector space (with the chosen point playing the role of the zero vector), the {\em tangent space} or the {\em vector fibre} at this point. The  linearisation of the bi-affine Lie bracket on an affine space on any vector fibre determines the Lie algebra structure on this fibre. Thus a Lie affgebra can be seen as a {\em Lie algebra fibred affine space}.  The aim of the present text is  to reveal and explore the relationship between Lie affgebras and the corresponding Lie algebra structures on the fibres.

The bi-affine map property of the Lie bracket on an affine space yields the distributivity of the bracket over the ternary heap operation arising from the translation of a point by a vector between two other points. The results of \cite{AndBrzRyb:ext} show that in the associative case there is a very clear connection between abelian heaps with bi-heap associative operation or {\em trusses} and extensions of rings by integers: trusses are in one-to-one correspondence with a specific class of extensions of rings by {\em double homotheties} of Redei \cite{Red:ver} or {\em self-permutable bimultiplications} of MacLane \cite{Mac:ext}.  Due to different natures of extensions in the associative and Lie cases, one cannot expect such a straightforward correspondence between Lie algebra extensions and Lie affgebras except for exeptional cases (such as derivation Lie affgebras discussed in Section~\ref{sec.idem}). On the other hand one can ask the following three questions. First, does the fact that a Lie algebra is a Lie fibre of a Lie affgebra yield additional data on it? Second, are there specific data which make a Lie algebra a fibre of a Lie affgebra or, in other words, can one construct a unique Lie affgebra from a Lie algebra with additional structure? Third, do all Lie affgebras arise in that way from Lie algebras (with additional data)? We give affirmative and constructive answers to all these questions. 

The paper is organised as follows. In Section~2 we recall and slightly modify the definition of a Lie affgebra from \cite{BrzPap:Lie}. The main difference from \cite{BrzPap:Lie} is that the Jacobi identity is now homogenised. Section~\ref{sec.LieLie} contains  main results. First,  Theorem~\ref{thm.alg.aff}  states that any Lie affgebra $\ggg$ determines and is determined by a Lie algebra (which is isomorphic to the Lie fibre of the Lie affgebra) together with two linear maps $\lambda$ and $\kappa$, and a constant. There is no restriction on the constant, while $\lambda$ is a {\em generalised derivation} in the sense of \cite{LegLuk:gen} of the type $(\delta, \lambda,\lambda)$, with $\delta=\lambda-\kappa$. This necessarily means that $\kappa$ is an element of the {\em quasicentroid} of $\ggg$, $\mathrm{QC}(\ggg)$, the Lie sub-algebra of linear endomorphisms of  $\ggg$ extensively studied in \cite{LegLuk:gen}.  Second, Theorem~\ref{thm.hom} asserts that a homomorphism between Lie affgebras is given by a homomorphism between Lie fibres and a constant. The Lie algebra homomorphism intertwines the respective elements of the quasicentroids, while intertwining general derivations up to the adjoint action by the constant. This allows for the formulation of clear criteria for isomorphisms between Lie affgebras. 

In Section~\ref{sec.idem} we analyse Lie affgebras in which defining linear transformations $\lambda$ and $\kappa$ of $\ggg$ combine into a (no longer generalised) derivation $\delta =\lambda-\kappa$. This happens if and only if $\kappa$ is an element of the {\em centroid} of $\ggg$, $\mathrm{C}(\ggg)$, a Lie sub-algebra of $\mathrm{QC}(\ggg)$ studied in depth in \cite[Chapter~X]{Jac:Lie}.  Consequently, the combination of results of \cite[Chapter~X]{Jac:Lie}, \cite{LegLuk:gen} and \cite{BurDek:pos} yields that over an algebraically closed field and up to isomorphism a  Lie affgebra with a simple Lie algebra fibre $\mathfrak{g}$ is fully determined by a scalar and an element of $\mathfrak{g}$, the latter given up to an automorphism of $\ggg$.  In bigger generality, when $\kappa$ is a scalar multiple of the identity,  we extend the existence of {\em Lie hulls} of \cite{GraGra:Lie}, i.e.\ Lie algebras in which Lie affgebras are universally embedded as cosets of ideals, beyond the Lie affgebras with idempotent brackets (which bijectively correspond to Lie affgebras of Grabowska, Grabowski and Urbański \cite{GraGra:Lie}). The construction of Lie affgebras from Lie algebras described in Section~\ref{sec.LieLie} is illustrated by a number of examples in Section~\ref{sec.ex}. We classify all Lie affgebras with one-dimensional fibres, non-abelian two-dimensional fibres and $\mathfrak{s}\mathfrak{l}(2,\mathbb{C})$ and  $\mathfrak{s}\mathfrak{o}(3)$ fibres. In Section~\ref{sec.ex.gna} we demonstrate how the correspondence between Lie affgebras and Lie algebras can be used to conclude non-isomorphism of a class of matrix Lie affgebras over finite fields, thus justifying the necessity of the restriction of the characteristic of the field or equivalently the size of the matrices demanded in \cite[Theorem~3.2]{BrzRad:mat}. The paper is concluded with  Section~\ref{sec.appl} in which we discuss {\em cocycle extensions} of Lie affgebras and relate them to central extensions of their Lie algebra fibres. 

\subsection*{Notation and conventions.} We work over a field $\FF$ of characteristic different from 2. The space of linear  endomorphisms of a vector space $V$ is denoted by $\mathrm{Lin}(V)$.

 The adjoint action of $a\in \ggg$ on a Lie algebra $\ggg$ is denoted by $\ad_a:\ggg\to\ggg$, $b\mapsto  [a,b]$, where  $[-,-]$ is the Lie bracket.

\section{Lie brackets on affine spaces}\label{sec.prem}
By an {\em abelian heap} \cite{Bae:ein}, \cite{Pru:the} we mean an algebraic system consisting of a set $X$ together with a ternary operation $\langle - ,-,-\rangle: X^3\to X$ such that, for all $x_i\in X$, $i=1,\ldots, 5$,
$$
\langle x_1 ,x_2,x_3\rangle = \langle x_3 ,x_2,x_1\rangle, \; \langle x_1 ,x_1,x_2\rangle = x_2, \; \langle \langle x_1 ,x_2,x_3\rangle, x_4,x_5\rangle = \langle  x_1 ,x_2,\langle  x_3 ,x_4,x_5\rangle\rangle.
$$
As a consequence of these conditions, the distribution of brackets does not matter, and hence we write simply
$
\langle x_1 ,x_2,x_3, x_4,x_5\rangle$ for $\langle  x_1 ,x_2,\langle  x_1 ,x_2,x_3\rangle\rangle.$
A homomorphism of heaps is a function $f: X\to Y$ preserving the operations in the sense that, for all $x_i\in X$, $f(\langle x_1 ,x_2,x_3\rangle) = \langle f(x_1) ,f(x_2),f(x_3)\rangle$. By fixing an element $o\in X$ and reducing ternary operation of a non-empty heap to a binary operation $x+y := \langle x ,o,y\rangle$, one obtains an abelian group (the retract of $X$ at $o$). Conversely, any abelian group determines a unique abelian heap structure with the operation $\langle a,b,c\rangle = a-b+c$. A heap homomorphism $f: X\to Y$ uniquely defines the homomorphism of abelian groups between the respective retracts  by the assignment $x\mapsto f(x) - f(o)$.

\begin{definition}\label{def.aff}
Following \cite{OstSch:bar} (see also \cite[Definition~4.8 \& Proposition~4.9]{BreBrz:hea}) by an {\em affine space} over a field $\FF$ or an {\em $\FF$-affine space} we mean an algebraic system consisting of a non-empty set $\aaa$ and two ternary operations

$$
\begin{aligned}
\langle -, - ,- \rangle : \aaa^3 \to \aaa, \qquad & (a,b,c)\mapsto \langle a, b ,c \rangle,\\
-\la_- -: \FF\times \aaa^2\to \aaa, \qquad & (\alpha, a,b) \mapsto \alpha \la_a b,
\end{aligned}
$$
such that,
\begin{blist}
   \item  $(\aaa, \langle -, - ,- \rangle)$ is an abelian heap;
    \item for all $a,b\in \aaa$ and $\alpha\in \FF$, 
    $
    \alpha \la_a-: \aaa\to \aaa $ 
    and  $-\la_a b: \FF\to \aaa$
    are 
    homomorphisms of heaps, where $\FF$ is the heap with the operation $\alpha-\beta+\gamma$;
    \item for all $a\in \aaa$, the map $-\la_a - : \FF\times \aaa \to \aaa$ is an action of the multiplicative monoid of $\FF$, that is, for all $\alpha,\beta \in \FF$ and $b\in \aaa$, $(\alpha\beta)\la_ab = \alpha\la_a(\beta\la_a b)$ and $1\la_a b=b$;
    \item for all $a,b\in \aaa$, $0\la_ab = a$;
    \item the {\em base change property} is satisfied, that is, for all $\alpha \in \FF$ and $a,b,c\in \aaa$,
    $$
    \alpha \la_a b = \langle \alpha \la_c b , \alpha \la_c a, a\rangle.
    $$
\end{blist} 
    For a given $a\in \aaa$, the map $-\la_a -: \FF\times \aaa\to\aaa$ is called an {\em affine action} (of  $\FF$ on $\aaa$) {\em with the  base} $a$.

    An {\em affine map} $f:\aaa\to\bbb$ is a heap homomorphism preserving the actions in the sense that, for all $a,b\in \aaa$ and $\alpha \in \FF$,
    $$
    f\left(\alpha\la_a b\right) = \alpha \la_{f(a)} f(b).
    $$
    The set of affine maps from $\aaa$ to $\bbb$ is denoted by $\mathrm{Aff}(\aaa,\bbb)$.
\end{definition}
\begin{remark}\label{rem.aff}
   Given an element $o$ of an $\FF$-affine space $\aaa$, the retract of the heap $\aaa$ at $o$, i.e., the abelian group structure on $\aaa$ with addition $a+b = \langle a, o, b\rangle$ and the neutral element $o$, together with the operation
   \begin{equation}\label{scalar.m}
       \FF\times \aaa\to \aaa, \qquad (\alpha, a)\mapsto \alpha a := \alpha\,\la_oa,
   \end{equation}
   is a vector space. We call it the {\em tangent space to $\aaa$ at $o$} or the {\em vector space fibre of $\aaa$ at $o$} and denote it by $\Tan o\aaa$. Different choices of $o$ lead to isomorphic tangent spaces. The vector space $\Tan o\aaa$ acts freely and transitively on $\aaa$ by 
   $$
   \aaa \times \Tan o\aaa \to \aaa, \qquad (a,b)\mapsto \langle a,o,b\rangle.
   $$
Thanks to the base change property (e) in Definition~\ref{def.aff}, the affine action of $\aaa$ in terms of the linear structure of its fibre at $o$ \eqref{scalar.m} can be written as
\begin{equation}\label{scalar.a}
    \alpha\, \la_ab = (1-\alpha)a +\alpha\, b,
\end{equation}
for all $a,b\in \aaa$ and $\alpha \in \FF$. (This formula is independent of the choice of $o$.) On the other hand, any vector space can be understood as an affine space with the affine action \eqref{scalar.a}.

Every $f\in \mathrm{Aff}(\aaa,\bbb)$ induces the unique linear transformation $\hat{f}: \Tan o\aaa \to \Tan{\tilde{o}}\aaa$, by $\hat{f}: a\mapsto f(a) - f(o)$. Conversely, given any linear transformation $\hat{f}: \Tan o\aaa \to \Tan{\tilde{o}}\aaa$ and an element $b\in \bbb$, the map $\aaa\to\bbb$, $a\mapsto \hat{f}(a) + b$ is an affine transformation.  All elements of $\mathrm{Aff}(\aaa,\bbb)$ arise in that way.

   The preceding discussion clarifies the connection between the traditional definition of an affine space as a set accompanied with a vector space (of free vectors) acting on it and Definition~\ref{def.aff}. 
\end{remark}

\begin{remark}\label{rem.aff.extra}
    The conditions of Definition~\ref{def.aff} imply that, for all $\alpha \in \FF$ and $b\in \aaa$, the map $\alpha \la_- b : \aaa \to \aaa$, $a \mapsto \alpha \la_a b$, is a homomorphism of heaps (so there is no need to state it explicitly as a part of the definition). Furthermore,
    \begin{equation}\label{idem}
      \alpha \la _a a = a,  
    \end{equation}
    for all $\alpha\in \FF$ and $a\in \aaa$. For the proofs see e.g.\ \cite[Lemma~3.5]{BreBrz:hea}.
\end{remark}

The main object of studies of this paper is given in the following
\begin{definition}\label{def.Lie}
    Let $\aaa$ be an affine space over $\FF$. A {\em Lie bracket} on $\aaa$ is a binary operation $\{-,-\}:\aaa\times \aaa \to \aaa$ that satisfies the following conditions: 
    \begin{blist}
    \item for all $a\in \aaa$, both $\{a,-\}$ and $\{-,a\}$ are affine transformations $\aaa\to \aaa$;
    \item affine antisymmetry, that is, for all $a,b\in \aaa$,
    \begin{equation}\label{antisym}
        \langle \{a,b\},\{a,a\}, \{b,a\}\rangle = \{b,b\};
    \end{equation}
    \item the affine Jacobi identity, that is, for all $a,b,c\in \aaa$,
    \begin{equation}\label{Jacobi}
        \langle \{a,\{b,c\}\},\{a,\{a,a\}\}, \{b,\{c,a\}\}, \{b,\{b,b\}\}, \{c,\{a,b\}\} \rangle = \{c,\{c,c\}\}.
    \end{equation}
    \end{blist}
    Following the terminology introduced in \cite{GraGra:Lie}, an affine space together with a Lie bracket is called a {\em Lie affgebra}.

    A homomorphism of Lie affgebras is an affine map that preserves Lie brackets.
\end{definition}

\begin{remark}\label{rem.new.old}
     A Lie affgebra of Definition~\ref{def.Lie} is a slight modification of the notion introduced in \cite[Definition~3.1]{BrzPap:Lie} in so far as the inhomogeneous Jacobi identity
   \begin{equation}\label{Jacobi.old}
        \langle \{a,\{b,c\}\},\{a,a\}, \{b,\{c,a\}\}, \{b,b\}, \{c,\{a,b\}\} \rangle = \{c,c\}.
    \end{equation}
    there is now replaced by its homogeneous version \eqref{Jacobi}. Note that \eqref{Jacobi.old} implies that, for all $a\in \aaa$, necessarily $3\{a,a\} = 3\{a,\{a,a\}\}$ in any abelian group retract of the heap $(\aaa, \langle -,-,-\rangle)$. Thus if the characteristic of the field is different from three, \eqref{Jacobi.old} implies \eqref{Jacobi}.

    Needless to say, as in \cite[Definition~3.1]{BrzPap:Lie}, there is a version of \eqref{Jacobi} in which the nested brackets $\{-,\{-,-\}\}$ are replaced by $\{\{-,-\},-\}$, but clearly results obtained in one convention are easily translated to the other one.
\end{remark}

The key fact motivating this paper that can also be considered as a strong support for Definition~\ref{def.Lie} is an  observation made in \cite[Theorem~3.15]{BrzPap:Lie} that any tangent space to a Lie affgebra inherits a natural Lie algebra structure. This observation is not affected by the homogenisation of the Jacobi identity. 
\begin{theorem}\label{thm.tangentLie}
    Let $\aaa$ be a Lie affgebra with a bracket $\{-,-\}$. Then, for all $o\ in \ \aaa$, $\Tan o\aaa$ is a Lie algebra with the bracket
    \begin{equation}\label{tangentLie}
        [a,b] = \{a,b\} - \{a,o\} +\{o,o\} - \{o,b\},
    \end{equation}
    for all $a,b\in \Tan o\aaa$. We call $\Tan o\aaa$ with this bracket the {\em Lie algebra tangent to $\aaa$ at $o$} or simply a {\em tangent Lie algebra} or a {\em Lie algebra fibre} of $\aaa$. 
\end{theorem}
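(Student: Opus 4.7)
The plan is to work in the abelian group retract of the heap $(\aaa,\langle-,-,-\rangle)$ at $o$, which equips $\Tan{o}\aaa$ with the vector space structure of Remark~\ref{rem.aff}, with additive zero $o$. In this additive notation the proposed bracket reads
$$
[a,b] = \{a,b\} - \{a,o\} + \{o,o\} - \{o,b\},
$$
and I verify the three axioms of a Lie algebra in turn.

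\textbf{Bilinearity.} For fixed $b$, both $a\mapsto \{a,b\}$ and $a\mapsto \{a,o\}$ are affine by Definition~\ref{def.Lie}(a), while $\{o,o\}$ and $\{o,b\}$ are constant in $a$; hence $a\mapsto [a,b]$ is an affine self-map of $\Tan{o}\aaa$. Since $[o,b]=0$ by direct substitution, this affine map sends the zero vector to itself and is therefore linear. The same argument in the second slot gives linearity in $b$.

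\textbf{Antisymmetry.} Specialising~\eqref{antisym} to $b=o$ yields $\langle \{a,o\},\{a,a\},\{o,a\}\rangle = \{o,o\}$, which in the retract reads $\{a,a\}=\{a,o\}+\{o,a\}-\{o,o\}$. Substituting into $[a,a]$ gives $[a,a]=0$, and bilinearity then produces $[a,b]=-[b,a]$.

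\textbf{Jacobi.} By bi-affineness, in the retract the bracket admits the unique decomposition
$$
\{x,y\} = B(x,y) + \phi(x) + \psi(y) + z,
$$
with $B=[-,-]$ bilinear, $\phi(x)=\{x,o\}-\{o,o\}$ and $\psi(y)=\{o,y\}-\{o,o\}$ linear, and $z=\{o,o\}$ constant. Translating~\eqref{Jacobi} to the retract gives
$$
\sum_{\rm cyc}\{a,\{b,c\}\} = \sum_{\rm cyc}\{a,\{a,a\}\}.
$$
Substituting the decomposition, using bilinearity of $B$ and the identity $B(x,x)=0$ from the antisymmetry step, and collecting terms reduces the identity to
$$
\sum_{\rm cyc}B(a,B(b,c)) + E(a,b,c) = 0,
$$
where $E(a,b,c)$ is a sum of correction terms each involving at least one of $\phi$, $\psi$. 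The main obstacle is showing $E\equiv 0$, which I handle by extracting two auxiliary consequences of~\eqref{Jacobi} by further specialisation: setting $b=c=o$ yields $B(a,\phi(a)+\psi(a))=0$, and setting $c=o$ yields $B(a,\phi(b))+B(b,\psi(a))+\psi(B(a,b))=0$. Summing the three cyclic permutations of the second identity and invoking the first cancels every term of $E$, leaving the classical Jacobi identity $\sum_{\rm cyc}B(a,B(b,c))=0$. This parallels the argument of \cite[Theorem~3.15]{BrzPap:Lie}; the shift from the non-homogeneous~\eqref{Jacobi.old} to the homogeneous~\eqref{Jacobi} noted in Remark~\ref{rem.new.old} is immaterial, since only the homogeneous content enters the computation.
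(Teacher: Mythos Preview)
Your proof is correct and follows the same line as the paper's, which simply records that the Jacobi identity is ``proven by a straightforward direct calculation'' using \eqref{Jacobi.tan}; your decomposition $\{x,y\}=B(x,y)+\phi(x)+\psi(y)+z$ together with the two auxiliary identities is exactly how that calculation is organised. In fact your identity (I), $B(a,\phi(a)+\psi(a))=0$, and identity (II), $B(a,\phi(b))+B(b,\psi(a))+\psi(B(a,b))=0$, are precisely the conditions $\kappa\in\mathrm{QC}(\ggg)$ and the relation \eqref{kl} (with $\kappa=\phi+\psi$, $\lambda=\psi$) that reappear in Theorem~\ref{thm.alg.aff}, so your argument also anticipates the structural data extracted there.
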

\begin{proof}
    Note that the bracket \eqref{tangentLie} is the unique bilinear map associated to the bi-affine map $\{-,-\}$ by the step by step linearisation that connects affine maps with the linear ones on the retracts as described in Remark~\ref{rem.aff}. In view of \eqref{antisym}, for all $a\in \aaa$, 
    $$
    [a,a] = \{a,a\} - \{a,o\} +\{o,o\} - \{o,a\} = o,
    $$
    and hence the bracket \eqref{tangentLie} is antisymmetric by bilinearity. 
    The Jacobi identity is proven by a straightforward direct calculation that uses the Jacobi identity \eqref{Jacobi} written in $\Tan o \aaa $,
    \begin{equation}\label{Jacobi.tan}
    \{a,\{b,c\}\}- \{a,\{a,a\}\} + \{b,\{c,a\}\} - \{b,\{b,b\}\} + \{c,\{a,b\}\} - \{c,\{c,c\}\} = o,
    \end{equation}
    for all $a,b,c\in \aaa$.
\end{proof}

All tangent Lie algebras to a given Lie affgebra are mutually isomorphic (as Lie algebras).

\begin{example}\label{ex.Lie.Lie}
    Similarly to vector spaces that can also be seen as affine spaces, any Lie algebra $\ggg$ is a Lie affgebra with the brackets, for all $a,b,s\in \ggg$,
    \begin{equation}\label{Lie.Lie}
        \{a,b\} = [a,b]+b +s.
    \end{equation}
    Any tangent Lie algebra to this Lie affgebra is isomorphic to $\ggg$. These are not all Lie affgebra structures with tangent $\ggg$. For example, given any $\zeta \in \FF$, an affine space $\aaa$ and $\chi\in \mathrm{Aff}(\aaa,\aaa)$, one can define the Lie affgebra $\aaa_{\zeta,\chi}$ with the affine action bracket 
    \begin{equation}\label{action.bracket.kappa}
        \{a,b\} = \langle\zeta\la _a b,a,\chi(a)\rangle.
    \end{equation}
If the characteristic of a field is different from 3 and $\zeta\neq 0$, then one easily checks that the bracket \eqref{action.bracket.kappa} satisfies the non-homogeneous affine Leibniz rule  if and only if $\chi = \id$ in which case the bracket takes the form
\begin{equation}\label{action.bracket}
        \{a,b\} = \zeta\la _a b.
    \end{equation}

As explained in \cite{BrzPap:Lie}, if $\aaa$ has at least two elements, $\aaa_{\zeta,\id} \cong \aaa_{\xi,\id}$ if and only if $\zeta =\xi$. On the other hand, for all $\zeta\in \FF$ and $o\in \aaa$, $\Tan o \aaa_{\zeta,\chi}$ is an abelian Lie algebra, i.e.\ $[a,b] = o$, for all $a,b\in \Tan o \aaa_{\zeta,\chi}$. 

It is one of the aims of this paper to study in detail the relationship between Lie affgebras and their tangent Lie algebras.
\end{example}

\section{Lie affgebras with a prescribed tangent Lie algebra}\label{sec.LieLie}

The first main result of this paper reveals close connection between Lie affgebras and generalised derivations of Lie algebras in the sense of Leger and Luks \cite{LegLuk:gen}.

\begin{theorem}\label{thm.alg.aff}
  Let $\ggg$  be a Lie algebra and $\kappa,\lambda\in \mathrm{Lin}(\ggg)$ be such that, for all $a,b\in \ggg$,
  \begin{equation}\label{kl}
      \lambda\left([a,b]\right) = [\lambda(a),b]-[a, \kappa(b)] + [a,\lambda(b)].
  \end{equation}
  Then, for all $s\in \ggg$, $\ggg$ is a Lie affgebra with the affine space structure
  $$
  \langle a,b,c \rangle = a-b+c, \qquad \alpha\la_{a} b = \alpha b+(1-\alpha)a, \quad \mbox{for all $a,b,c\in \ggg$, $\alpha\in \FF$,}
  $$
  and the affine Lie bracket, for all $a,b\in \ggg$,
  \begin{equation}\label{aff.bra}
      \{a,b\} = [a,b] +\kappa(a) + \lambda(b-a) + s.
  \end{equation}
We denote this Lie affgebra by $\aaa(\ggg;\kappa,\lambda,s)$. Furthermore, for all $o\in \ggg$
$$
\Tan o \aaa(\ggg;\kappa,\lambda,s) \cong \ggg.
$$

Conversely, for any Lie affgebra $\aaa$ and any $o\in \aaa$, there exist $\kappa,\lambda,s$ necessarily satisfying \eqref{kl} and such that 
$\aaa = \aaa(\Tan o\aaa;\kappa,\lambda,s)$. 
\end{theorem}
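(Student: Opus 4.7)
The theorem has two directions --- the forward construction of $\aaa(\ggg;\kappa,\lambda,s)$ and the recovery of $\kappa$, $\lambda$, $s$ from an arbitrary Lie affgebra --- which I would handle in turn.

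For the forward direction, the affine-space axioms on the vector space of $\ggg$ are the standard ones, so only the Lie-bracket axioms of Definition~\ref{def.Lie} require verification. Bi-affineness of \eqref{aff.bra} is immediate: fixing $a$, the formula becomes $b\mapsto[a,b]+\lambda(b)+(\kappa(a)-\lambda(a)+s)$, a linear map of $b$ plus a constant, and the case of fixed $b$ is analogous --- neither check uses \eqref{kl}. Affine antisymmetry \eqref{antisym} rewritten in the retract at $0$ reads $\{a,b\}-\{a,a\}+\{b,a\}=\{b,b\}$; substituting \eqref{aff.bra} and using $\{a,a\}=\kappa(a)+s$, both sides collapse to $\kappa(b)+s$ by antisymmetry of $[-,-]$. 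The affine Jacobi \eqref{Jacobi}, rewritten in the retract as \eqref{Jacobi.tan}, is where \eqref{kl} is genuinely needed: the plan is to expand each of the six nested brackets $\{x,\{y,z\}\}$ by two applications of \eqref{aff.bra} and sort the resulting contributions into (i)~iterated Lie brackets $[x,[y,z]]$ whose cyclic sum vanishes by Jacobi in $\ggg$; (ii)~terms of the form $\lambda([x,y])$, rewritten via \eqref{kl} as $[\lambda(x),y]-[x,\kappa(y)]+[x,\lambda(y)]$; (iii)~mixed single-commutator terms $[x,\lambda(y)]$, $[x,\kappa(y)]$, $[\lambda(x),y]$; (iv)~$(\kappa,\lambda,s)$-polynomial residues with no Lie bracket. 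After the rewrite, classes (ii) and (iii) cancel pairwise by cyclic symmetry precisely by virtue of \eqref{kl} --- which also forces $\kappa\in\mathrm{QC}(\ggg)$ via the swap $a\leftrightarrow b$ in \eqref{kl} combined with antisymmetry of $[-,-]$, a fact needed to kill diagonal $[x,\kappa(x)]$ residues --- while the class (iv) residues cancel upon symmetrisation in $a,b,c$, with the diagonal correctors $\{x,\{x,x\}\}$ absorbing the $\lambda\kappa$ and $\lambda^2$ inhomogeneities, which is precisely the role of the \emph{homogeneous} form of \eqref{Jacobi}. For the tangent-fibre claim, substituting \eqref{aff.bra} into \eqref{tangentLie} for arbitrary $o\in\ggg$ makes all $\kappa$-, $\lambda$- and $s$-contributions cancel, leaving $[a-o,b-o]$, so the translation $a\mapsto a-o$ is the required Lie-algebra isomorphism $\Tan o\aaa(\ggg;\kappa,\lambda,s)\cong\ggg$.

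For the converse, given a Lie affgebra $\aaa$ and $o\in\aaa$, I would pass to the retract at $o$ so that $\aaa$ is realised as the vector space $\Tan o\aaa$ with Lie bracket \eqref{tangentLie} (Theorem~\ref{thm.tangentLie}), and set
\[
s:=\{o,o\},\qquad \lambda(b):=\{o,b\}-\{o,o\},\qquad \kappa(a):=\{a,o\}-\{o,o\}+\lambda(a).
\]
Linearity of $\lambda$ (hence of $\kappa$) follows from the fact that $\{o,-\}$ and $\{-,o\}$ are affine by Definition~\ref{def.Lie}(a), so these maps are precisely the linearisations described in Remark~\ref{rem.aff}. Direct substitution using \eqref{tangentLie} shows that $[a,b]+\kappa(a)+\lambda(b-a)+s$ collapses to $\{a,b\}$, establishing $\aaa=\aaa(\Tan o\aaa;\kappa,\lambda,s)$. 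Condition \eqref{kl} is then extracted from \eqref{Jacobi.tan} by specialising $(a,b,c)$ to $(a,b,o)$: the $c$-dependent brackets reduce to $\kappa$- and $\lambda$-expressions in $a,b$, and combining the resulting equation with the conjugate specialisation $(b,a,o)$ and the antisymmetry of $[-,-]$ produces exactly \eqref{kl}.

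The genuine difficulty is the bookkeeping in the Jacobi verification: six doubly-nested brackets, each splitting into four summands per application of \eqref{aff.bra}, yield dozens of raw terms that must sort into the four classes above and cancel in the precise pattern dictated by \eqref{kl}. Identifying the unique regrouping that invokes \eqref{kl} once per cyclic position, and confirming that the diagonal correctors $\{x,\{x,x\}\}$ absorb exactly the $\lambda\kappa$ and $\lambda^2$ residues, is the one step that resists short-cutting; everything else is either standard retract calculus or a direct expansion using the formulas \eqref{aff.bra} and \eqref{tangentLie}.
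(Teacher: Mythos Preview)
Your proposal is correct and follows essentially the same architecture as the paper's proof: verify bi-affineness, antisymmetry, and Jacobi by expansion in the forward direction, then in the converse read off $s$, $\lambda$, $\kappa$ from the bracket evaluated at $o$ and recover \eqref{kl} by specialising the affine Jacobi identity.

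There is one genuine difference worth recording. In the converse, the paper defines $\kappa(a)=\{a,a\}-\{o,o\}$ and must then argue separately that this is linear --- additivity uses antisymmetry, while homogeneity requires an explicit computation invoking the base-change property of Definition~\ref{def.aff}(e). Your choice $\kappa(a)=(\{a,o\}-\{o,o\})+\lambda(a)$ sidesteps this entirely: both summands are linearisations of affine maps, so linearity is immediate, and the verification that \eqref{aff.bra} reproduces $\{a,b\}$ is a one-line substitution into \eqref{tangentLie}. This is a clean simplification. Conversely, the paper's presentation of the forward Jacobi check is more explicit: it writes out the full expression, then extracts \eqref{k} by setting $b=c$ and \eqref{kl} by setting $c=0$, thereby establishing that \eqref{kl} is \emph{equivalent} to the affine Jacobi identity for the bracket \eqref{aff.bra} --- which in turn lets the converse conclude \eqref{kl} without a second specialisation, simply by citing the forward computation. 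Your class-(i)--(iv) bookkeeping is the same calculation described structurally; it works, but you could save yourself the separate $(a,b,o)$/$(b,a,o)$ extraction in the converse by noting, as the paper does, that the forward Jacobi analysis already proves the equivalence.
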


\begin{proof} Let $\ggg$ be a Lie algebra and $\kappa,\lambda$ and $s$ be as in the assumptions of theorem.  Due to the fact that the Lie bracket $[-,-]$ is a bilinear operation and $\kappa,\lambda$ are linear functions, the bracket \eqref{aff.bra} is a bi-affine transformation as the sum of a linear and constant parts.
To prove affine antisymmetry let us take any $a,b \in \ggg$, then from the antisymmetry of the Lie bracket and from the linearity of $\lambda$ we obtain
$$\begin{aligned}
    \langle \{a,b\},\{a,a\},\{b,a\} \rangle & = \{a,b\}-\{a,a\}+\{b,a\} \\
     & =
    [a,b]+\kappa(a)+\lambda(b-a)+s\\
     &-\kappa(a)-s+[b,a]+\kappa(b)+\lambda(a-b)+s\\
    & = 
    \kappa(b)+s=\{b,b\}
    \end{aligned}
    $$
    Thus the affine antisymmetry holds with no restrictions on the data $\lambda,\kappa$ and $s$.
    
    To show the affine Jacobi identity \eqref{Jacobi} of the bracket \eqref{aff.bra}, let us first note that
$$
\begin{aligned}
    \{a,\{b,c\}\}- \{a,\{a,a\}\} =& [a,[b,c]] + [a,\kappa(b)] + [a,\lambda(c)]
    - [a,\lambda(b)]\\
    &+\lambda\left([b,c]\right) + \lambda\kappa(b-a) +\lambda^2(c-b) - [a,\kappa(a)]. 
\end{aligned}
$$
Hence the bracket defined by the formula \eqref{aff.bra} satisfies the affine Jacobi identity \eqref{Jacobi} if and only if
$$\begin{aligned}
    0 = &\langle \{a,\{b,c\}\}, \{a,\{a,a\}\},\{b,\{c,a\}\},\{b,\{b,b\}\},\{c,\{a,b\}\} \rangle - \{c,\{c,c\}\} \\
    =& \; \{a,\{b,c\}\}- \{a,\{a,a\}\}+\{b,\{c,a\}\}-\{b,\{b,b\}\}+\{c,\{a,b\}\} \rangle - \{c,\{c,c\}\}\\
    =&\; [a,[b,c]] + [a,\kappa(b)]+[a,\lambda(c)]-[a,\lambda(b)]+\lambda([b,c]) - [a,\kappa(a)]\\
    &+[b,[c,a]] +[b,\kappa(c)]+[b,\lambda(a)]-[b,\lambda(c)]+\lambda([c,a])- [b,\kappa(b)]\\
    &+[c,[a,b]] +[c,\kappa(a)]+[c,\lambda(b)]-[c,\lambda(a)]+\lambda([a,b])- [c,\kappa(c)]\\
    =&\;  [a,\kappa(b)]+[a,\lambda(c)]-[a,\lambda(b)]+\lambda([b,c]) - [a,\kappa(a)] +[b,\kappa(c)]+[b,\lambda(a)]-[b,\lambda(c)]\\
    &+\lambda([c,a])- [b,\kappa(b)]
    +[c,\kappa(a)]+[c,\lambda(b)]-[c,\lambda(a)]+\lambda([a,b])- [c,\kappa(c)].
\end{aligned}$$
For $b=c$  the above equality holds if and only if 
$[b-a,\kappa(b-a)] =0$ or, equivalently, 
\begin{equation}\label{k}
   [a,\kappa(a)] =0, 
\end{equation}
for all $a\in \ggg$. Next, setting $c=0$  and in view of \eqref{k}, we deduce the necessity  of  \eqref{kl}. Since \eqref{k} follows from \eqref{kl}, by setting $b=a$ in \eqref{kl}, the bracket \eqref{aff.bra} satisfies the affine Jacobi identity.

Writing $[-,-]'$ for the bracket in $\Tan o\aaa(\ggg;\kappa,\lambda,s)$, in terms of the vector space structure of $\ggg$, we have
$$
[a,b]' = [a,b] - [a,o] - [o,b] +o,
$$
and hence the vector space isomorphism 
$$
\varphi:  \Tan o \aaa(\ggg;\kappa,\lambda,s)\to \ggg, \qquad a\mapsto a-o,
$$
is the required isomorphism of Lie algebras.

In the opposite direction, given a Lie affgebra $\aaa$, we need to indicate for any element $o\in \aaa$ linear functions $\kappa,\lambda$ acting on the tangent Lie algebra $\Tan o\aaa$  and an element $s\in \aaa$ which meet the condition \eqref{kl}. Comparing \eqref{aff.bra} with \eqref{tangentLie}, we expect  good candidates for these to be the following data:
    \begin{equation*}
        s=\{o,o\}\, ,
    \end{equation*}
    \begin{equation*}
        \lambda : \Tan o\aaa \rightarrow \Tan o\aaa , \quad a \mapsto \{o,a\}-\{o,o\}\, ,
        \end{equation*}
    \begin{equation*}
        \kappa: \Tan o\aaa \rightarrow \Tan o\aaa ,\quad a \mapsto\{a,a\}-\{o,o\}.
    \end{equation*}

Since $\{o,-\}$ is an affine map, it is clear that $\lambda$ is a linear transformation. On the other hand, establishing of the linearity of $\kappa$ requires one to take a closer look. First, that  $\kappa$ is an additive map follows by the bi-affine property of the Lie bracket and its affine antisymmetry \eqref{antisym}. Next, take any scalar $\alpha \in \FF$ and $a\in \aaa$ and using the bi-affine property of the Lie bracket and the base-change property Definition~\ref{def.aff}~(e) (to reduce the actions at each consecutive step to the base $o$, i.e.\ to the multiplication by scalars in $\Tan o\aaa$) compute:
$$
\begin{aligned}
    \kappa(\alpha a) &= \{\alpha \la_o a, \alpha\la_o a\} - \{o,o\} = \alpha \la _{\{o,\alpha \la _oa\}} \{a,\alpha \la _oa\} - \{o,o\}\\
    &= \alpha \{a,\alpha \la _oa\} - \alpha \{o,\alpha \la _oa\} +\{o,\alpha \la _oa\} - \{o,o\}\\
    &= \alpha\left(\alpha\la_{\{a,o\}} \{a,a\}\right) - \alpha\left(\alpha\la_{\{o,o\}} \{o,a\}\right) +\alpha\la_{\{o,o\}} \{o,a\} - \{o,o\}\\
    &= \alpha^2\left(\{a,a\} - \{a,o\} -\{o,a\} + \{o,o\}\right) + \alpha\left( \{a,o\} +\{o,a\} - \{o,o\}\right) - \alpha\{o,o\}.
\end{aligned}
$$
The required equality $\kappa(\alpha a) = \alpha\kappa(a)$ now follows by  antisymmetry of $\{-,-\}$.

In view of the affine antisymmetry the relation between the affine Lie bracket $\{-,-\}$ and its tangent Lie bracket $[-,-]$ at $o$ \eqref{tangentLie} is expressed in terms of $s$, $\lambda$ and $\kappa$ by the formula \eqref{aff.bra}, and the first part of the proof ensures that the condition $\eqref{kl}$ is fulfilled.
\end{proof}

Following \cite{LegLuk:gen} we denote by $\Delta(\ggg)$ the set of triples $(\lambda,\lambda',\lambda'')$ of linear endomorphisms of $\ggg$ that satisfy the following condition, for all $a,b\in \ggg$,
  \begin{equation}\label{gen.der}
         [\lambda(a),b] + [a,\lambda'(b)] = \lambda''\left([a,b]\right).
    \end{equation}
Any map $\lambda$ for which there exist $\lambda'$ and $\lambda''$ such that $(\lambda,\lambda',\lambda'') \in \Delta(\ggg)$ is called a {\em generalised derivation}. By slightly bending this terminology, in what follows we will refer either to $\lambda$ or the triple $(\lambda,\lambda',\lambda'')$ as to a generalised derivation. Among all generalised derivations one distinguishes several important classes. First and most standard, $\lambda$ is a {\em derivation} of $\ggg$ if and only if $(\lambda,\lambda,\lambda)\in \Delta(\ggg)$. A {\em quasi-centroid} of $\ggg$, denoted by $\mathrm{QC}(\ggg)$, is a vector space of all $\kappa \in \mathrm{Lin}(\ggg)$, such that $(\kappa,-\kappa,0)\in \Delta(\ggg)$. That is, $\kappa \in \mathrm{QC}(\ggg)$ if and only if, for all $a,b\in \ggg$,
\begin{equation}\label{quasicent}
    [\kappa(a),b] = [a,\kappa(b)].
\end{equation}
Since we assume that $\mathrm{char} \FF \neq 2$, the condition \eqref{quasicent} is equivalent to \eqref{k}, i.e.\
\begin{equation}\label{quasicent.a}
    \mathrm{QC}(\ggg) = \{\kappa \in \mathrm{Lin}(\ggg)\; |\; [\kappa(a),a] =0, \mbox{for all}\; a\in \ggg\}.
\end{equation}
It is easily seen that $\mathrm{QC(\ggg)}$ is a Lie algebra with the bracket given by the commutator (defined with respect to the composition of endomorphisms). The structure of  $\mathrm{QC(\ggg)}$ is studied in detail in \cite{LegLuk:gen}. For example,  if $Z(\ggg) = 0$, then $\mathrm{QC(\ggg)}$ is a commutative associative algebra (with respect to composition); see \cite[Theorem~5.12]{LegLuk:gen}. Next, the {\em centroid} of $\ggg$, denoted by $\mathrm{C}(\ggg)$ is the space of linear endomorphisms $\kappa$ such that $(0,\kappa,\kappa) \in \Delta(\ggg)$ or, equivalently by the antisymmetry of the Lie bracket, $(\kappa,0,\kappa) \in \Delta(\ggg)$. That is $\kappa \in \mathrm{C}(\ggg)$ if and only if, for all $a,b\in\ggg$,
\begin{equation}\label{centroid}
\kappa([a,b]) = [\kappa(a),b]= [a,\kappa(b)].
\end{equation}
Obviously $\mathrm{C}(\ggg) \subseteq \mathrm{QC}(\ggg)$ and it is clear that $\mathrm{C}(\ggg)$ is an associative unital algebra with respect to composition. Centroids of Lie algebras are studied in \cite[Chapter~X]{Jac:Lie}, where it is shown, for instance, that if $\ggg$ is a perfect Lie algebra (that is, $[\ggg,\ggg] = \ggg$) then $\mathrm{C}(\ggg)$ is a commutative algebra \cite[Chapter~X,~Lemma~1]{Jac:Lie}, while if $\ggg$ is simple, then $\mathrm{C}(\ggg)$ is a field \cite[Chapter~X,~Theorem~1]{Jac:Lie}.

The interplay between the quasicentroid and the centroid of a Lie algebra $\ggg$ is studied in \cite{LegLuk:gen}. In particular, it is proven there that  if $Z(\ggg)=0$, then $\mathrm{QC}(\ggg) = \mathrm{C}(\ggg) \oplus A$, where $A^2=0$ \cite[Theorem~5.12]{LegLuk:gen}, Furthermore, if in addition $[\ggg,\ggg]=\ggg$, then $\mathrm{QC}(\ggg) = \mathrm{C}(\ggg)$, \cite[Theorem~5.28]{LegLuk:gen}.

\begin{lemma}\label{lem.gen}

    Let $\ggg$ be a Lie algebra,  $\lambda,\delta \in \mathrm{Lin}(\ggg)$, and set $\kappa:= \lambda -\delta$. 
    \begin{zlist}
        \item The following statements are equivalent:
        \begin{rlist}
                  \item $(\delta,\lambda,\lambda)\in \Delta(\ggg)$;
        \item for all $\alpha \in \mathrm{C}(\ggg)$, $(\delta,\lambda+\alpha,\lambda+\alpha)\in \Delta(\ggg)$;
        \item for all derivations $\alpha$ of $\ggg$, $(\delta+\alpha,\lambda+\alpha,\lambda+\alpha)\in \Delta(\ggg)$;
        \item $\lambda$ and $\kappa$ satisfy \eqref{kl}.
          \end{rlist} 
        \item  If $(\delta,\lambda,\lambda)\in \Delta(\ggg)$, then:
        \begin{rlist}
         \item for all positive integers $n$, $\kappa^n\in \mathrm{QC}(\ggg)$;
    \item  for all $a\in \ggg$, the vector space $\ggg_\kappa(a)$ spanned by $\{\kappa^{n}(a)\; |\; n\in \NN\}$ is an abelian Lie subalgebra of $\ggg$;
    \item $\delta$ is a derivation if and only if $\kappa \in  \mathrm{C}(\ggg)$.
        \end{rlist}
    \end{zlist}
    \end{lemma}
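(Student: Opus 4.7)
My plan is to verify both parts by direct manipulation of the defining identities \eqref{gen.der}, \eqref{kl}, \eqref{quasicent}, \eqref{centroid}, using antisymmetry of the Lie bracket as the main lever.

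For Part (1), I would first settle the equivalence (i)$\Leftrightarrow$(iv). Substituting $\delta = \lambda - \kappa$ into (i) rewrites it as $\lambda([a,b]) = [\lambda(a),b] - [\kappa(a),b] + [a,\lambda(b)]$, whereas (iv) reads $\lambda([a,b]) = [\lambda(a),b] - [a,\kappa(b)] + [a,\lambda(b)]$. The two differ precisely by $[\kappa(a),b] - [a,\kappa(b)]$, so once one knows $\kappa \in \mathrm{QC}(\ggg)$ the equivalence is immediate. The key observation is that each of (i) and (iv) \emph{separately} forces $\kappa \in \mathrm{QC}(\ggg)$: swapping $a$ and $b$ in either identity, applying antisymmetry of $[-,-]$, and comparing with the original yields $[\kappa(a),b] = [a,\kappa(b)]$. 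For (i)$\Leftrightarrow$(ii) and (i)$\Leftrightarrow$(iii), I would exploit $\FF$-linearity of the condition $(-,-,-)\in\Delta(\ggg)$ in each slot: adding $\alpha \in \mathrm{C}(\ggg)$ to both the second and third entries of $(\delta,\lambda,\lambda)$ changes the left-hand side of \eqref{gen.der} by $[a,\alpha(b)]$ and the right-hand side by $\alpha([a,b])$, which agree by \eqref{centroid}; analogously, adding a derivation $\alpha$ to all three slots preserves membership in $\Delta(\ggg)$ by the Leibniz rule.

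For Part (2), once $\kappa \in \mathrm{QC}(\ggg)$ is established via Part (1), statement (i) follows by a one-step induction: $[\kappa^n(a),b] = [\kappa^{n-1}(a),\kappa(b)] = \cdots = [a,\kappa^n(b)]$, each equality being a single application of the quasicentroid property. For (ii), the same transfer argument yields $[\kappa^m(a),\kappa^n(a)] = [a,\kappa^{m+n}(a)]$; using now $\kappa^{m+n} \in \mathrm{QC}(\ggg)$ from (i) together with antisymmetry, this bracket equals its own negative and therefore vanishes because $\mathrm{char}\,\FF \neq 2$; abelianness of $\ggg_\kappa(a)$ then follows by bilinearity. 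For (iii), expanding $\delta([a,b]) = [\delta(a),b] + [a,\delta(b)]$ with $\delta = \lambda - \kappa$ and substituting the assumed identity in the form $[\delta(a),b] + [a,\lambda(b)] = \lambda([a,b])$ reduces the derivation property of $\delta$ to the single condition $\kappa([a,b]) = [a,\kappa(b)]$. Combined with the already available quasicentroid relation $[a,\kappa(b)] = [\kappa(a),b]$, this is exactly $\kappa \in \mathrm{C}(\ggg)$.

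The main obstacle I anticipate is the logical subtlety in (1)(i)$\Leftrightarrow$(iv): the two equations are not syntactically equivalent as written, and the bridging quasicentroid property of $\kappa$ must first be extracted from either side by the swap-and-antisymmetrise trick before the direct comparison takes effect. Once this observation is made, the rest of the argument is essentially linear-algebraic bookkeeping grounded in the linearity of \eqref{gen.der} and the definitions of $\mathrm{C}(\ggg)$ and $\mathrm{QC}(\ggg)$.
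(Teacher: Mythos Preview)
Your proposal is correct and follows essentially the same route as the paper's proof: linearity of the $\Delta(\ggg)$ condition for (ii) and (iii), the swap-plus-antisymmetry manoeuvre for (i)$\Leftrightarrow$(iv), iterated quasicentroid transfer for (2)(i)--(ii), and subtraction of the assumed identity for (2)(iii). The one cosmetic difference is that the paper handles (i)$\Leftrightarrow$(iv) in a single stroke---swapping $a\leftrightarrow b$ in \eqref{kl} and applying antisymmetry converts it directly into $\lambda([a,b]) = [\lambda(a),b] - [\kappa(a),b] + [a,\lambda(b)]$, which is literally (i)---so the intermediate extraction of $\kappa\in\mathrm{QC}(\ggg)$ that you flag as the ``main obstacle'' is not actually needed for this equivalence (though it is of course needed and used in Part~(2)).
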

\begin{proof}
(1) Since the zero map is both an element of the centroid and a derivation, that either (ii) or (iii) implies (i) is obvious. Thus assume that $(\delta,\lambda,\lambda)\in \Delta(\ggg)$. Then, for all $\alpha \in \mathrm{C}(\ggg)$ and $a,b\in \ggg$, we immediately obtain  that 
$$
[\delta(a),b] + [a,\lambda(b)] + [a,\alpha(b)]  = \lambda([a,b]) +\alpha([a,b]),
$$
as required. The implication $(i)\implies (iii)$ is equally immediate. Finally, by the antisymmetry of the Lie bracket, equation \eqref{kl} can be equivalently written as
\begin{equation}\label{gk}
    \lambda([a,b]) = [\lambda(a),b] - [\kappa(a),b]+ [a,\lambda(b)],
\end{equation}
for all $a,b\in \ggg$, and thus it is clear that it holds if and only if $(\lambda-\kappa,\lambda,\lambda)$ is a generalised derivation.

(2) In view of (1) the assumption is equivalent to the fact that $\lambda$ and $\kappa$ satisfy \eqref{gk} or \eqref{kl}.

(i) Setting $b=a$ in \eqref{gk} we immediately find that \eqref{quasicent.a} holds, so $\kappa \in \mathrm{QC}(\ggg)$. Using \eqref{quasicent.a} repeatedly, we find for all $n$ and $a\in \ggg$,
$$
[\kappa^n(a),a]=[\kappa^{n-1}(a),\kappa(a)] = \ldots = [a,\kappa^n(a)].
$$
Hence $[\kappa^n(a),a] = 0$ by the antisymmetry of the Lie bracket.

(ii) Since, for all $n\in \NN$, $\kappa^n\in \mathrm{QC}(\ggg)$, for all $k,l\in \NN$, $[\kappa^k(a),\kappa^l(a)] = 0$, as $[\kappa^k(a),\kappa^l(a)] = [\kappa^{k+l}(a),a] = 0$, and therefore $\ggg_\kappa(a)$ is an abelian Lie subalgebra of $\ggg$.

(iii) Since $\delta = \lambda - \kappa$, $\delta$ is a derivation if and only if 
$$
\lambda([a,b]) - \kappa([a,b]) = [\lambda(a),b] - [\kappa(a),b] + [a,\lambda(b)] - [a,\kappa(b)].
$$
As $\lambda$ is assumed to satisfy \eqref{gk} it immediately follows that the above equality is satisfied if and only if $\kappa\in \mathrm{C}(\ggg)$.
\end{proof}

The first assertion of Lemma~\eqref{lem.gen} allows one to state Theorem~\ref{thm.alg.aff} in the following (equivalent) form 
\begin{corollary}\label{cor.alg.ff}
    There is a one-to-one correspondence between Lie affgebras with Lie fibre $\ggg$ and generalised derivations $(\delta,\lambda,\lambda)$ of $\ggg$ supplemented with an element of $\ggg$.
\end{corollary}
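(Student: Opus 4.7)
The plan is essentially to repackage Theorem~\ref{thm.alg.aff} via the equivalence established in Lemma~\ref{lem.gen}(1). Theorem~\ref{thm.alg.aff} already gives a bijection between Lie affgebra structures on a set having $\ggg$ as Lie fibre and triples $(\kappa,\lambda,s)$ with $\kappa,\lambda\in\mathrm{Lin}(\ggg)$ satisfying \eqref{kl} and $s\in\ggg$. What remains is to recast the data $(\kappa,\lambda)$ as a generalised derivation.

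First, I would introduce the change of variables $\delta:=\lambda-\kappa$, so that $\kappa=\lambda-\delta$. This is manifestly a bijection on pairs of linear endomorphisms, and so the correspondence
\[
(\kappa,\lambda,s)\;\longleftrightarrow\;(\delta,\lambda,s)
\]
is a bijection between ordered triples in $\mathrm{Lin}(\ggg)\times\mathrm{Lin}(\ggg)\times\ggg$. Second, I would invoke the equivalence (i)$\Leftrightarrow$(iv) of Lemma~\ref{lem.gen}(1), which says that $\lambda$ and $\kappa=\lambda-\delta$ satisfy \eqref{kl} if and only if $(\delta,\lambda,\lambda)\in\Delta(\ggg)$. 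Under this equivalence the admissible triples $(\kappa,\lambda,s)$ of Theorem~\ref{thm.alg.aff} correspond precisely to generalised derivations of the form $(\delta,\lambda,\lambda)$ together with an element $s\in\ggg$.

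Composing the two bijections yields the asserted one-to-one correspondence. Finally, to make the statement meaningful one should note that the resulting Lie affgebra $\aaa(\ggg;\lambda-\delta,\lambda,s)$ has $\Tan o\ggg\cong\ggg$ by the second assertion of Theorem~\ref{thm.alg.aff}, so the correspondence truly enumerates, up to the canonical identification of the fibre, all Lie affgebras with Lie fibre $\ggg$.

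There is no genuine obstacle here: the content was already done in Theorem~\ref{thm.alg.aff} and Lemma~\ref{lem.gen}. The only point that warrants care is the direction of the bijection—namely, that both $\delta\mapsto\kappa=\lambda-\delta$ and its inverse $\kappa\mapsto\delta=\lambda-\kappa$ preserve the respective conditions, which is exactly the symmetric content of Lemma~\ref{lem.gen}(1)(i)$\Leftrightarrow$(iv).
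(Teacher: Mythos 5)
Your proposal is correct and matches the paper's (implicit) argument exactly: the corollary is stated as an immediate reformulation of Theorem~\ref{thm.alg.aff} via the substitution $\delta=\lambda-\kappa$ and the equivalence (i)$\Leftrightarrow$(iv) of Lemma~\ref{lem.gen}(1). Nothing further is needed.
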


We note in passing that in terms of $(\delta,\lambda,\lambda)\in \Delta(\ggg)$  and $s\in \ggg$ the bracket \eqref{aff.bra} comes out as
\begin{equation}\label{bra.der}
    \{a,b\} = [a,b] -\delta (a)+ \lambda(b) + s.
\end{equation}
Furthermore, the bracket
$$
 \{a,b\}' := [a,b] -\delta (b)+ \lambda(a) + s.
$$
 satisfies the alternative version of the affine Jacobi identity mentioned in the last paragraph of Remark~\ref{rem.new.old} (i.e.\ with the nesting of the brackets $\{\{-,-\}',-\}'$). So both versions of the Lie affgebra are obtained with exactly the same sets of data on a Lie algebra, thus confirming the claim made at the end of Remark~\ref{rem.new.old} about the conceptual equivalence of both approaches.

Theorem~\ref{thm.alg.aff} allows one to make a foray into the classification of Lie affgebras with a prescribed tangent Lie algebra (we will illustrate this in Section~\ref{sec.ex}). To make this strategy feasible however, first one needs to determine homomorphisms between Lie affgebras $\aaa(\ggg; \kappa,\lambda,s)$.

\begin{theorem}\label{thm.hom}
    A function $\varphi:\aaa(\ggg; \kappa,\lambda,s)\to \aaa(\ggg'; \kappa',\lambda',s')$ is a homomorphism of Lie affgebras if and only if there exist a Lie algebra homomorphism
$\psi: \ggg\to\ggg'$ and $q'\in \ggg'$, such that
\begin{subequations}\label{hom}
    \begin{equation}\label{hom.kk'}
        \psi \kappa = \kappa'\psi,
    \end{equation}
    \begin{equation}\label{hom.ll'}
        \psi \lambda = (\ad_{q'} +\lambda')\psi,
    \end{equation}
    \begin{equation}\label{hom.ss'}
        \psi (s) = s' - q' +\kappa'(q').
    \end{equation}
\end{subequations}
\end{theorem}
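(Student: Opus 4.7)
The plan is to parametrise a candidate homomorphism $\varphi$ in the standard affine way, equate the bracket-preservation requirement, and read off the three conditions \eqref{hom.kk'}--\eqref{hom.ss'} by matching the bilinear, linear-in-$a$, linear-in-$b$, and constant pieces of the resulting identity.

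First I would use Remark~\ref{rem.aff}: since both $\aaa(\ggg;\kappa,\lambda,s)$ and $\aaa(\ggg';\kappa',\lambda',s')$ have the same underlying affine structure as $\ggg$ and $\ggg'$ respectively, an affine map $\varphi:\ggg\to\ggg'$ is exactly a function of the form $\varphi(a)=\psi(a)+q'$, where $q':=\varphi(0)$ and $\psi(a):=\varphi(a)-q'$ is $\FF$-linear. Thus the content of the theorem is to identify the additional constraints on $\psi$ and $q'$ that make $\varphi$ preserve the affine Lie brackets.

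Next I would compute both sides of $\varphi(\{a,b\})=\{\varphi(a),\varphi(b)\}'$ using \eqref{aff.bra} in $\ggg$ and $\ggg'$. The left-hand side expands, by linearity of $\psi$, to
\[
\psi([a,b])+\psi\kappa(a)+\psi\lambda(b)-\psi\lambda(a)+\psi(s)+q',
\]
while the right-hand side, after using bilinearity of $[-,-]$ in $\ggg'$ and linearity of $\kappa',\lambda'$, becomes
\[
[\psi(a),\psi(b)]-\ad_{q'}\psi(a)+\ad_{q'}\psi(b)+\kappa'\psi(a)+\kappa'(q')+\lambda'\psi(b)-\lambda'\psi(a)+s'.
\]
The identity of these two expressions for all $a,b\in\ggg$ is an equality of polynomial maps in $(a,b)$ of total degree two, so it holds if and only if all homogeneous components match. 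The purely bilinear component gives $\psi([a,b])=[\psi(a),\psi(b)]$, i.e.\ $\psi$ is a Lie algebra homomorphism. The component linear in $b$ alone yields $\psi\lambda(b)=\ad_{q'}\psi(b)+\lambda'\psi(b)$, which is \eqref{hom.ll'}. The constant component gives $\psi(s)+q'=\kappa'(q')+s'$, which is \eqref{hom.ss'}. Finally, substituting \eqref{hom.ll'} into the component linear in $a$ alone, the $\ad_{q'}$ and $\lambda'$ contributions cancel, leaving $\psi\kappa(a)=\kappa'\psi(a)$, i.e.\ \eqref{hom.kk'}.

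Conversely, assuming $\psi$ is a Lie algebra homomorphism satisfying \eqref{hom.kk'}--\eqref{hom.ss'}, the map $\varphi(a):=\psi(a)+q'$ is affine by construction, and reversing the above expansion shows that $\varphi(\{a,b\})=\{\varphi(a),\varphi(b)\}'$. The only step that requires slight care is the extraction of the four homogeneous pieces: this is justified either by setting $a=b=0$, then $b=0$ with general $a$, then $a=0$ with general $b$, and finally taking the remaining bilinear part; equivalently, one can substitute $a\mapsto \alpha a$, $b\mapsto \beta b$ and compare coefficients of $1,\alpha,\beta,\alpha\beta$. I do not anticipate a genuine obstacle — the computation is mechanical — but the bookkeeping of the $\ad_{q'}$ contributions arising from $[\psi(a)+q',\psi(b)+q']$ and their partial cancellation against the image of $\lambda$ under $\psi$ is the one place where a sign or a transposition could easily slip.
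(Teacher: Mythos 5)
Your proposal is correct and follows essentially the same route as the paper's proof: both write $\varphi(a)=\psi(a)+q'$ with $\psi$ linear, expand $\varphi(\{a,b\})=\{\varphi(a),\varphi(b)\}$ via \eqref{aff.bra}, and extract \eqref{hom.kk'}--\eqref{hom.ss'} together with the Lie-homomorphism property of $\psi$ by specialising $a$ and $b$ (the paper uses $a=b=0$, then $a=b$, then $a=0$, which is the same bookkeeping as your component-matching justified by evaluation at zero).
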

\begin{proof}
    A function $\varphi:\aaa(\ggg; \kappa,\lambda,s)\to \aaa(\ggg'; \kappa',\lambda',s')$ is a homomorphism of affine spaces if and only if the function
    $$
    \psi: \ggg \to \ggg', \qquad a\mapsto \varphi(a) - \varphi(0),
    $$
    is a linear transformation. Set $q'=\varphi(0)$.

    Written explicitly the Lie affgebra homomorphism condition, for all $a,b\in \ggg$,
    $$
    \varphi\left(\{a,b\}\right) = \{\varphi(a),\varphi(b)\},
    $$
    boils down to 
    \begin{equation}\label{hom.cond}
    \begin{aligned}
         \psi\left([a,b]\right) + \psi\kappa (a) +\psi\lambda(b-a) &+\psi(s) + q' = [\psi(a),\psi(b)]+ [q',\psi(b-a)] \\
         &+ \kappa'\psi(a) + \kappa(q')
         +\lambda'(\psi(b)-\psi(a)) + s'.
    \end{aligned}
    \end{equation}
    Setting $a=b=0$ we obtain \eqref{hom.ss'}, and thus \eqref{hom.cond} reduces to
    \begin{equation}\label{hom.cond'}
    \begin{aligned}
         \psi\left([a,b]\right) + \psi\kappa (a) &+\psi\lambda(b-a) = [\psi(a),\psi(b)] \\
         &+ [q',\psi(b-a)] 
         + \kappa'\psi(a)
         +\lambda'(\psi(b)-\psi(a)).
    \end{aligned}
    \end{equation}
    Setting $a=b$ in \eqref{hom.cond'} we obtain \eqref{hom.kk'} and thus further simplification of \eqref{hom.cond},
    \begin{equation}\label{hom.cond''}
    \begin{aligned}
         \psi\left([a,b]\right) +\psi\lambda(b-a) = [\psi(a),\psi(b)]
         + [q',\psi(b-a)] 
         +\lambda'(\psi(b)-\psi(a)).
    \end{aligned}
    \end{equation}
    Finally,  setting $a =0$ yields \eqref{hom.ll'} as well as the fact that $\psi$ is a homomorphism of Lie algebras.

    The converse is straightforward. 
\end{proof}

\begin{corollary}\label{cor.class}
    Let $\aaa = \aaa(\ggg; \kappa,\lambda,s)$ and $\aaa' = \aaa(\ggg'; \kappa',\lambda',s')$. Then $\aaa$ is isomorphic to $\aaa'$ if and only if there exist a Lie algebra isomorphism $\Psi: \ggg\to \ggg'$ and an element $q\in \ggg$ such that
    \begin{subequations}\label{iso.class}
        \begin{equation}\label{kk'}
            \kappa' = \Psi \kappa \Psi^{-1},
        \end{equation}
        \begin{equation}\label{ll'}
            \lambda' = \Psi (\lambda - \ad_q) \Psi^{-1},
        \end{equation}
        \begin{equation}\label{ss'}
            s' = \Psi(s+ q -\kappa(q)).
        \end{equation}
    \end{subequations}
\end{corollary}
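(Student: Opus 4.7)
My plan is to apply Theorem~\ref{thm.hom} directly, in both directions, identifying the element $q'\in\ggg'$ that appears there with $q'=\Psi(q)$, where $q\in\ggg$ is the element featuring in the corollary and $\Psi$ is the candidate Lie algebra isomorphism. The only algebraic observation driving the translation between the two sets of conditions is the intertwining identity $\ad_{\Psi(q)}\circ\Psi=\Psi\circ\ad_q$, valid for any Lie algebra homomorphism $\Psi$ and any $q\in\ggg$, which reflects the fact that $\Psi$ sends inner derivations to inner derivations.

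For the forward direction I would start with an isomorphism $\varphi:\aaa\to\aaa'$ of Lie affgebras. Theorem~\ref{thm.hom} furnishes a Lie algebra homomorphism $\psi:\ggg\to\ggg'$ and an element $q':=\varphi(0)\in\ggg'$ with $\varphi(a)=\psi(a)+q'$ satisfying \eqref{hom.kk'}--\eqref{hom.ss'}. Since $\varphi$ is bijective and translation by $q'$ is a bijection, $\psi$ must also be bijective, hence a Lie algebra isomorphism; set $\Psi:=\psi$ and $q:=\Psi^{-1}(q')$. Multiplying \eqref{hom.kk'} on the right by $\Psi^{-1}$ yields \eqref{kk'}. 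Substituting $q'=\Psi(q)$ into \eqref{hom.ll'} and applying the intertwining identity gives $\Psi\lambda=\Psi\ad_q+\lambda'\Psi$, which rearranges to \eqref{ll'}. Finally, \eqref{hom.ss'} reads $\Psi(s)=s'-\Psi(q)+\kappa'\Psi(q)$; replacing $\kappa'\Psi(q)$ by $\Psi\kappa(q)$ using the already derived \eqref{kk'} and rearranging produces \eqref{ss'}.

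For the converse, given $\Psi$ and $q$ satisfying \eqref{kk'}--\eqref{ss'}, I would define $\varphi(a):=\Psi(a)+\Psi(q)$ and put $q':=\Psi(q)$. Reversing each step of the forward computation shows that the pair $(\Psi,q')$ satisfies \eqref{hom.kk'}--\eqref{hom.ss'}, so by Theorem~\ref{thm.hom} the map $\varphi$ is a Lie affgebra homomorphism. It is bijective because $\Psi$ is, and the inverse of any bijective homomorphism of Lie affgebras is itself a homomorphism (the bracket and affine-action preservation conditions transfer across the inverse by a routine substitution), so $\varphi$ is an isomorphism.

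I do not anticipate a genuine obstacle: the only point needing care is the consistent bookkeeping of the substitution $q'\leftrightarrow\Psi(q)$ together with the identity $\Psi\ad_q=\ad_{\Psi(q)}\Psi$. Once these are in place, the corollary reduces to Theorem~\ref{thm.hom} essentially by relabelling.
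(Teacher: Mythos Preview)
Your proposal is correct and follows essentially the same route as the paper: both derive the corollary directly from Theorem~\ref{thm.hom} by setting $q=\Psi^{-1}(q')$ (where $q'=\varphi(0)$), invoking the intertwining identity $\ad_{\Psi(q)}\Psi=\Psi\ad_q$ (equivalently $\ad_{q'}=\Psi\ad_q\Psi^{-1}$), and rearranging \eqref{hom.kk'}--\eqref{hom.ss'} into \eqref{kk'}--\eqref{ss'}. Your write-up is simply more explicit about the converse direction and the fact that a bijective Lie affgebra homomorphism is an isomorphism, points the paper leaves implicit.
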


\begin{proof}
    This follows immediately from Theorem~\ref{thm.hom} as the affine map $\Phi: \aaa\to \aaa'$ is an isomorphism if and only if its linear part $\Psi = \Phi - \Phi(0):\ggg\to\ggg'$ is an isomorphism of vector spaces. Rearranging equations \eqref{hom}, setting $q=\Psi^{-1}(q')$, where $q' = \Phi(0)$ and noting that $\ad_{q'} = \Psi\ad_q\Psi^{-1}$ we obtain conditions \eqref{iso.class}.
\end{proof}

We observe in passing that in terms of generalised derivations $(\delta,\lambda,\lambda)$ and $(\delta',\lambda',\lambda')$ (where $\delta = \lambda - \kappa$ and $\delta' = \lambda' - \kappa'$) equations \eqref{hom.kk'}, \eqref{kk'} take the forms
$$
 \psi \delta = (\ad_{q'} +\delta')\psi, \qquad \delta' = \Psi (\delta - \ad_q) \Psi^{-1},
$$
respectively. 

As an immediate consequence of Corollary~\ref{cor.class} we obtain the following criterion of non-isomorphism of Lie affgebras.

\begin{corollary}\label{cor.non.is}
    If Lie affgebras $\aaa$ and $\bbb$ have non-isomorphic Lie algebra fibres, then they are not isomorphic.
\end{corollary}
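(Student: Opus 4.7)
The proof proceeds by contraposition: I will show that if $\aaa\cong\bbb$ as Lie affgebras, then some (equivalently, every) Lie algebra fibre of $\aaa$ is isomorphic to some Lie algebra fibre of $\bbb$. Fix base points $o\in\aaa$ and $o'\in\bbb$ and set $\ggg=\Tan o\aaa$, $\ggg'=\Tan{o'}\bbb$. By the second part of Theorem~\ref{thm.alg.aff}, there exist $\kappa,\lambda\in\Lin(\ggg)$, $s\in\ggg$ and $\kappa',\lambda'\in\Lin(\ggg')$, $s'\in\ggg'$ such that
$$
\aaa=\aaa(\ggg;\kappa,\lambda,s),\qquad \bbb=\aaa(\ggg';\kappa',\lambda',s').
$$

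Now suppose $\aaa\cong\bbb$. Applying Corollary~\ref{cor.class} to this isomorphism supplies, in particular, a Lie algebra isomorphism $\Psi\colon\ggg\to\ggg'$ (the other data $q$ and the intertwining relations \eqref{kk'}--\eqref{ss'} are not needed here, only the bare existence of $\Psi$). Hence $\ggg\cong\ggg'$ as Lie algebras.

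Finally, the comment immediately following Theorem~\ref{thm.tangentLie} records that all tangent Lie algebras of a given Lie affgebra are mutually isomorphic, so the isomorphism class of the fibre is an invariant of the Lie affgebra. Consequently, if the fibres of $\aaa$ and $\bbb$ are not isomorphic as Lie algebras, then no isomorphism $\aaa\to\bbb$ can exist. There is no real obstacle in this argument; it is purely a matter of extracting from Corollary~\ref{cor.class} the linear-isomorphism-of-fibres component of the data, so the proof amounts to a single invocation of the preceding corollary.
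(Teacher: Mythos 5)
Your proof is correct and follows essentially the same route as the paper, which presents the corollary as an immediate consequence of Corollary~\ref{cor.class}: an isomorphism of Lie affgebras forces a Lie algebra isomorphism $\Psi$ between the fibres, and the contrapositive is the statement. Your extra remarks (invoking Theorem~\ref{thm.alg.aff} to put both affgebras in the form $\aaa(\ggg;\kappa,\lambda,s)$ and noting that all tangent fibres of a given affgebra are mutually isomorphic) simply make explicit what the paper leaves implicit.
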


Theorem~\ref{thm.hom} allows one also to identify simple Lie affgebras. Following the universal algebra (or category theory) convention, we say that a Lie affgebra $\aaa$ is {\em simple} provided every homomorphism with $\aaa$ as the domain is either constant or an injective function.

\begin{corollary}\label{cor.simple}
    If $\ggg$ is a simple Lie algebra, then   $\aaa(\ggg; \kappa,\lambda,s)$ is a simple Lie affgebra. 
\end{corollary}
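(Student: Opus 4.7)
The plan is to reduce the simplicity of the Lie affgebra to the simplicity of the underlying Lie algebra via the explicit description of homomorphisms provided by Theorem~\ref{thm.hom}. Let $\varphi: \aaa(\ggg; \kappa,\lambda,s) \to \aaa'$ be an arbitrary Lie affgebra homomorphism, where $\aaa' = \aaa(\ggg'; \kappa',\lambda',s')$ for some Lie algebra $\ggg'$ and appropriate data (every target Lie affgebra is of this form by Theorem~\ref{thm.alg.aff}). By Theorem~\ref{thm.hom} there exists a Lie algebra homomorphism $\psi:\ggg\to\ggg'$ and an element $q'\in \ggg'$ such that $\varphi(a) = \psi(a) + q'$ for all $a\in \ggg$.

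Next I would use the simplicity of $\ggg$ to analyse $\psi$. The kernel $\ker\psi$ is an ideal of the Lie algebra $\ggg$, hence by simplicity it equals either $\{0\}$ or $\ggg$. In the first case $\psi$ is injective, and since $\varphi(a) - \varphi(b) = \psi(a-b)$, the affine map $\varphi$ is itself injective. In the second case $\psi\equiv 0$, so $\varphi(a) = q'$ for every $a\in \ggg$, i.e.\ $\varphi$ is a constant function.

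Since the homomorphism $\varphi$ was arbitrary, every homomorphism out of $\aaa(\ggg; \kappa,\lambda,s)$ is either constant or injective, which is precisely the simplicity condition for a Lie affgebra.

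There is no real obstacle to this argument; the only subtlety is making sure that the dichotomy ``constant versus injective'' for affine maps corresponds exactly to the dichotomy ``zero versus injective'' for their linear parts, which is immediate from the formula $\varphi(a)-\varphi(b) = \psi(a-b)$ available in any abelian group retract of the underlying heap.
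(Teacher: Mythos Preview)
Your argument is correct and essentially identical to the paper's own proof: both realise the target as $\aaa(\ggg';\kappa',\lambda',s')$ via Theorem~\ref{thm.alg.aff}, decompose $\varphi=\psi+q'$ via Theorem~\ref{thm.hom}, and then use the simplicity of $\ggg$ to force $\psi$ to be either zero or injective. Your extra remark that $\varphi(a)-\varphi(b)=\psi(a-b)$ makes the passage from the linear dichotomy to the affine one slightly more explicit than in the paper, but the approach is the same.
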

\begin{proof}
    Consider a Lie affgebra homomorphism $\varphi: \aaa(\ggg; \kappa,\lambda,s) \to \aaa'$. By Theorem~\ref{thm.alg.aff}, for any $o\in \aaa'$, $\aaa' = \aaa(\Tan o\aaa'; \kappa',\lambda',s')$. In view of Theorem~\ref{thm.hom}, $\varphi = \psi +q'$, where $\psi:\ggg\to \Tan o\aaa'$ is a Lie algebra homomorphism satisfying conditions \eqref{hom}. Since $\ggg$ is a simple Lie algebra $\psi$ is either the zero map, in which case $\varphi$ is constant or injective, in which case also $\varphi$ is injective.
\end{proof}

Given a vector space $V$ and its subspace $U$, any coset $v+U\subseteq V$ is an affine subspace of $V$ in a natural way:
$$
\langle v+u, v+u',v+u''\rangle = v+ (u-u'+u''), \quad \alpha \la_{v+u} (v+u') = v+ (1-\alpha u) +\alpha u',
$$
for all $u,u',u'' \in U$. The criterion when a coset of a subspace of a Lie algebra $\ggg$ yields a Lie subaffgebra of $\aaa (\ggg;\kappa,\lambda,s)$ is given in the following

\begin{proposition}\label{prop.sub.Lie}
     Let $\aaa = \aaa (\ggg;\kappa,\lambda,s)$ be a Lie affgebra and let $\bbb$ be an affine subspace of $\aaa$. Then $\bbb$ is a Lie subaffgebra of $\aaa$ if and only if there exist $a\in \bbb$ and a Lie subalgebra $\hhh$ of $\aaa$, such that $\bbb = a +\hhh$ and 
  \begin{blist}
      \item $\kappa(a)-a+s \in \hhh$;
      \item $\kappa(\hhh) \subseteq \hhh$;
      \item $(\lambda+\ad_a)(\hhh)\subseteq \hhh$.
  \end{blist}

Furthermore, 
$$\bbb \cong \aaa(\hhh;\kappa, \lambda+\ad_a, \kappa(a)-a+s),$$
as Lie affgebras.
\end{proposition}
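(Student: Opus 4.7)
The plan is to exploit the fact that, by Theorem~\ref{thm.alg.aff}, the affine space underlying $\aaa$ is simply the vector space $\ggg$ with operations $\langle x,y,z\rangle = x-y+z$ and $\alpha\la_x y = \alpha y + (1-\alpha)x$. Any affine subspace $\bbb \subseteq \ggg$ is therefore a coset $a + \hhh$ for some vector subspace $\hhh \subseteq \ggg$ and any choice of basepoint $a \in \bbb$. The problem then reduces to translating closure of $a+\hhh$ under the bracket \eqref{aff.bra} into algebraic conditions on the data $\hhh,\kappa,\lambda,a,s$.

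The key computation is direct: using bilinearity of $[-,-]$, linearity of $\kappa$ and $\lambda$, and $[a,a]=0$, one obtains
\[
\{a+h,\,a+h'\} \;=\; a + \bigl([h,h'] + (\lambda+\ad_a)(h'-h) + \kappa(h) + (\kappa(a)-a+s)\bigr),
\]
for all $h,h' \in \hhh$. Demanding that the parenthesised expression lie in $\hhh$ for all $h,h'\in\hhh$ yields the three listed conditions by successive specialisation. Setting $h=h'=0$ isolates $\kappa(a)-a+s \in \hhh$, i.e.\ (a). Given (a), setting $h=h'$ kills the bracket and the $(\lambda+\ad_a)$-term, leaving $\kappa(h)\in\hhh$, i.e.\ (b). Given (a) and (b), setting $h=0$ forces $(\lambda+\ad_a)(h') \in \hhh$, i.e.\ (c). Finally, with (a), (b), (c) in hand, what remains from the displayed formula is the requirement $[h,h']\in\hhh$ for all $h,h'\in \hhh$, i.e.\ that $\hhh$ is a Lie subalgebra of $\ggg$. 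The converse is immediate from the same displayed formula.

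For the isomorphism, the affine bijection $\varphi:\hhh\to \bbb$, $h\mapsto a+h$, comes for free from the description $\bbb=a+\hhh$. The displayed formula shows that $\varphi$ intertwines $\{-,-\}|_\bbb$ with the bracket
\[
\{h,h'\}' := [h,h'] + (\lambda+\ad_a)(h'-h) + \kappa(h) + (\kappa(a)-a+s)
\]
on $\hhh$, which is precisely the Lie affgebra bracket \eqref{aff.bra} on $\hhh$ with data $\bigl(\kappa|_\hhh,\,(\lambda+\ad_a)|_\hhh,\,\kappa(a)-a+s\bigr)$.

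The only mild subtlety is to check that this triple is a legitimate input for the construction of Theorem~\ref{thm.alg.aff}, i.e.\ that $\kappa|_\hhh$ and $(\lambda+\ad_a)|_\hhh$ satisfy \eqref{kl} on $\hhh$. This is immediate from Lemma~\ref{lem.gen}(1): since $\ad_a$ is an inner derivation of $\ggg$, the pair $(\lambda+\ad_a,\kappa)$ already satisfies \eqref{kl} on $\ggg$, and conditions (b), (c) ensure that the restrictions to $\hhh$ take values in $\hhh$.
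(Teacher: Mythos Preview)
Your proof is correct and follows essentially the same route as the paper: write $\bbb = a + \hhh$, expand $\{a+h,a+h'\}$, and successively specialise ($h=h'=0$, $h=h'$, one variable $=0$) to extract conditions (a), (b), (c) and the subalgebra property of $\hhh$, then read off the isomorphism from the same formula and invoke Lemma~\ref{lem.gen}(1) to verify that the restricted data satisfy \eqref{kl}. Your packaging of the expansion with the $(\lambda+\ad_a)$-term already grouped is in fact slightly cleaner than the paper's, but the argument is otherwise identical.
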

\begin{proof}
    An affine subspace $\bbb$ is necessarily of the form $a+\hhh$, where $\hhh$ is a vector subspace of $\ggg$. Then  $\bbb = a+\hhh$ is a Lie subaffgebra of $\aaa$ if and only if, for all $v,w\in V$,
    \begin{equation}\label{sub.Lie}
        \{a+v,a+w\} = [v,w] + [a,w] +[v,a] +\kappa(a) +\kappa(v) +\lambda(w-v) +s \in a+ \hhh.
    \end{equation}
    Setting $v=w=0$ in \eqref{sub.Lie} we obtain property (a). In view of (a), setting  $v=w$ yields (b), while setting $w=0$ yields (c). All this then necessarily implies that $[v,w]\in\hhh$, hence $\hhh$ is a Lie subalgebra of $\ggg$.  Conversely, if $\hhh$ is a Lie subalgebra of $\ggg$ and (a)--(c) hold, $\{a+v,a+w\}\in a+\hhh$, so $a+\hhh$ is a Lie subaffgebra of $\aaa$. 

    Properties (a)--(c) imply that, when restricted to $\hhh$, $\kappa, \lambda+\ad_a \in \mathrm{Lin}(\hhh)$, while $\kappa(a)-a+s\in \hhh$. Since $\lambda$ satisfies \eqref{kl}, its restriction to $\hhh$ shifted by the inner derivation $\ad_a$ also satisfies property \eqref{kl} by Lemma~\ref{lem.gen}. Hence $\aaa(\hhh;\kappa, \lambda+\ad_a, \kappa(a)-a+s)$ is a Lie affgebra. Its isomorphism with $a+\hhh$ is given by $v\mapsto a+v$.
\end{proof}

\section{Derivation-type Lie affgebras and Lie algebra hulls}\label{sec.idem}
As explained in \cite[Proposition~3.5]{BrzPap:Lie} there is a bijective correspondence between idempotent Lie affgebra brackets on an affine space $\aaa$ and Lie brackets valued in the tangent vector space at any given element of $\aaa$ that satisfy Lie affgebra axioms as stated in \cite{GraGra:Lie}. On the other hand \cite[Theorem~10]{GraGra:Lie} establishes that the latter can be lifted up to a Lie bracket on a vector space hull of $\aaa$, that is, a universally defined vector space in which $\aaa$ is a hyperplane of codimension one. The resulting Lie algebra is called the {\em Lie algebra hull} of $\aaa$. In this section we extend Lie algebra hulls to a wider class of Lie affgebras and interpret them as extensions of a tangent Lie algebra by derivation.

\begin{definition}\label{def.deriv.type}
    A Lie affgebra $\aaa$ is said to be {\em derivation-type} if $\aaa\cong\aaa(\ggg; \kappa, \lambda,s)$ such that $\delta:= \lambda-\kappa $ is a derivation of $\ggg$.
\end{definition}

Lemma~\ref{lem.gen} immediately implies that
    a Lie affgebra $\aaa(\ggg; \kappa, \lambda,s)$ is a derivation-type Lie affgebra if and only if $\kappa\in \mathrm{C}(\ggg)$.
Thus the study of derivation-type Lie affgebras in large part boils down to the study of centroids of Lie algebras

\begin{proposition}\label{prop.perfect.simple}~
    \begin{zlist}
    \item Any finite dimensional Lie affgebra with a centreless perfect Lie algebra fibre is a derivation-type Lie affgebra.
        \item If $\FF$ is an algebraically closed field and $\ggg$ is a simple Lie algebra, then:
        \begin{rlist}
            \item Any Lie affgebra with $\ggg$ as the tangent Lie algebra is necessarily isomorphic to $\aaa(\ggg; \kappa\,\id, \kappa\,\id, s)$, for some $s\in \ggg$ and $\kappa\in \FF$.  In particular, the affine Lie bracket is
    \begin{equation}\label{bra.semi}
        \{a,b\} = [a,b] + \kappa\, b +s,
    \end{equation}
    for all $a,b\in \ggg$.
    \item Furthermore,
    \begin{equation}\label{isomorphisms}
    \aaa(\ggg;\kappa\,\id,\kappa\,\id, s)\cong \aaa(\ggg;\kappa'\,\id,\kappa'\,\id, s'),
\end{equation} 
as Lie affgebras if and only if $\kappa'=\kappa$ and $s' = \Psi(s)$, where $\Psi$ is an automorphism of $\ggg$.
        \end{rlist}
    \end{zlist} 
\end{proposition}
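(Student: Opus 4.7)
The plan is to deduce both parts from two structural facts already at hand: the classification of Lie affgebra data $(\kappa,\lambda,s)$ provided by Theorem~\ref{thm.alg.aff} and Lemma~\ref{lem.gen}, and the explicit isomorphism criterion of Corollary~\ref{cor.class}. No further Lie-affgebraic computation will be needed; everything reduces to locating $\kappa$ and $\delta=\lambda-\kappa$ inside known subspaces of $\mathrm{Lin}(\ggg)$.

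For part (1), I would first observe that $\kappa\in\mathrm{QC}(\ggg)$ always, via the characterisation \eqref{quasicent.a} and the identity \eqref{k} extracted in the proof of Theorem~\ref{thm.alg.aff}. Under the hypotheses that $\ggg$ is finite-dimensional, centreless and perfect, \cite[Theorem~5.28]{LegLuk:gen} collapses $\mathrm{QC}(\ggg)$ onto $\mathrm{C}(\ggg)$, so $\kappa\in\mathrm{C}(\ggg)$. Lemma~\ref{lem.gen}(2)(iii) then forces $\delta=\lambda-\kappa$ to be a derivation, which is exactly what Definition~\ref{def.deriv.type} demands.

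For part (2)(i), the previous paragraph already secures $\kappa\in\mathrm{C}(\ggg)$, because a simple Lie algebra is centreless and perfect. I would then invoke Jacobson's \cite[Chapter~X,~Theorem~1]{Jac:Lie} to realise $\mathrm{C}(\ggg)$ as a field extension of $\FF$; algebraic closure of $\FF$ together with the finite-dimensionality of $\ggg$ collapses this extension to $\FF\,\id$, so $\kappa=\kappa_0\,\id$ for some $\kappa_0\in\FF$. Lemma~\ref{lem.gen}(2)(iii) now promotes $\delta=\lambda-\kappa_0\,\id$ to an honest derivation, and since every derivation of a simple finite-dimensional Lie algebra over an algebraically closed field is inner, there exists $q\in\ggg$ with $\delta=\ad_q$. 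A single application of Corollary~\ref{cor.class} with $\Psi=\id$ and this $q$ absorbs $\ad_q$ into the base-point shift, producing an isomorphism onto $\aaa(\ggg;\kappa_0\,\id,\kappa_0\,\id,\tilde s)$ with $\tilde s=s+(1-\kappa_0)q$. The resulting bracket is precisely \eqref{bra.semi}.

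For part (2)(ii), the reverse implication is a one-line check: given an automorphism $\Psi$ with $\kappa'=\kappa$ and $s'=\Psi(s)$, taking $q=0$ in Corollary~\ref{cor.class} verifies all three conditions. For the forward implication I would inspect those three equations in turn: \eqref{kk'} yields $\kappa'=\kappa$ because $\Psi$ commutes with any scalar multiple of the identity; \eqref{ll'} reduces to $\kappa\,\id=\kappa\,\id-\ad_{\Psi(q)}$, whence $\ad_{\Psi(q)}=0$ and centrelessness of $\ggg$ forces $q=0$; finally \eqref{ss'} collapses to $s'=\Psi(s)$. The main obstacle is not conceptual but bookkeeping: one must cleanly chain three classical inputs (Leger--Luks to identify $\mathrm{QC}(\ggg)$, Jacobson to pin down $\mathrm{C}(\ggg)$, and the innerness of all derivations of a simple finite-dimensional Lie algebra over an algebraically closed field) before Corollary~\ref{cor.class} can finish the job.
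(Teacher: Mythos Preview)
Your proof is correct and follows essentially the same approach as the paper: reduce $\kappa$ to $\mathrm{C}(\ggg)$ via \cite[Theorem~5.28]{LegLuk:gen}, use innerness of derivations on simple Lie algebras together with Corollary~\ref{cor.class} to absorb $\delta$, and then exploit centrelessness to force $q=0$ in the isomorphism criterion. The only cosmetic difference is the order in part~(2)(i): you first pin down $\kappa$ as a scalar (via Jacobson's theorem on the centroid) and then handle $\delta$, whereas the paper first reduces $\lambda-\kappa$ to an inner derivation, applies Corollary~\ref{cor.class}, and only afterwards invokes Schur's lemma to make $\kappa$ scalar---but the ingredients and logic are the same.
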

\begin{proof}
    
    (1) The statement is a straightforward consequence of \cite[Theorem~5.28]{LegLuk:gen}, which states that if $Z(\ggg)=0$ and $[\ggg,\ggg]=\ggg$, then $\mathrm{QC}(\ggg) = \mathrm{C}(\ggg)$.

(2)(i) Since a simple Lie algebra satisfies the hypothesis of the first statement, $\aaa(\ggg; \kappa, \lambda, s)$ is a derivation-type Lie affgebra. All derivations of a simple Lie algebra are inner, hence
    $$
    \lambda - \kappa = \ad_q,
    $$
    for some $q\in \ggg$. Choosing this $q$ in Corollary~\ref{cor.class} and using \eqref{ll'}, we obtain $\lambda'=\kappa'$, as required. The formula for the bracket follows immediately from \eqref{aff.bra}. Furthermore, since $\kappa \in \mathrm{C}(\ggg)$, necessarily $\kappa \circ \ad_a = \ad_a \circ \kappa$, for all $a\in \ggg$ and by the simplicity of $\ggg$  and  Schur's lemma, $\kappa$ is a scalar multiple of the identity (compare \cite[Chapter~X, Theorem ~1]{Jac:Lie} or \cite[Corollary~5.5]{BurDek:pos}).

    (2)(ii) Since $\ggg$ has the trivial centre, the constant $q$ in Corollary~\ref{cor.class} is necessary zero, as any other choice would separate $\lambda$ from $\kappa$. Thus isomorphisms \eqref{isomorphisms} correspond to Lie algebra automorphisms $\Psi$ of $\ggg$, which in  view of Corollary~\ref{cor.class} do not modify $\kappa$ while changing $s$ to $s' = \Psi(s)$.
\end{proof}

The automorphisms of simple complex Lie algebras are well-known; see e.g.\ \cite[Chapter~IX]{Jac:Lie}. In particular, in the case of series $A_n$, i.e.,  Lie algebras of traceless matrices $\sss\lll(n+1,\CC)$, the isomorphisms \eqref{isomorphisms} take place if and only if there is a special linear matrix $A\in \mathrm{SL}(n,\CC)$ such that $s' = A^{-1}sA$ in all cases or $s' = -A^{-1}s^{T}A$ if $n\geq 2$; see \cite[Chapter~IX, Theorem~5]{Jac:Lie}. In the case of the Lie algebra of odd-dimensional antisymmetric matrices (the $B_n$ series) of dimension at least 5 (i.e., $n\geq 2$) and even-dimensional antisymmetric matrices (the $D_n$ series) of dimension at least 10 (i.e., $n\geq 5$), the isomorphisms \eqref{isomorphisms} hold if and only if there is an orthogonal matrix $O$ such that $s' = O^{-1}sO$; see \cite[Chapter~IX, Theorem~6]{Jac:Lie}. Finally, in the case of the  Lie algebras from the series $C_n$ with $n\geq 3$, the correspondence \eqref{isomorphisms} is given by symplectic matrices $M$, $s' = M^{-1}sM$; see again \cite[Chapter~IX, Theorem~6]{Jac:Lie}. In Section~\ref{sec.ex.sl2} we give an explicit description of isomorphism classes of Lie affgebras with the fibres $\sss\lll(2,\CC)$.

Recall that if $\delta$ is a derivation of a Lie algebra $\ggg$, then one can define the semi-direct product Lie algebra  $ \ggg(\delta) := \{x + \alpha \delta\; |\; x\in \ggg, \alpha \in \FF\}\cong \ggg \oplus \FF$ with the bracket
\begin{equation}\label{Lie.ext}
    [x+\alpha\delta, y+\beta\delta] = [x,y] + \alpha \delta(y) - \beta\delta(x),
\end{equation}
for all $x,y\in \ggg$ and scalars $\alpha,\beta\in \FF$. It is clear that $\ggg$ is an ideal in  the Lie algebra $\ggg(\delta)$.

\begin{proposition}\label{prop.hull}
    Let $\aaa$ be a derivation-type Lie affgebra isomorphic to  $\aaa(\ggg; \kappa\,\id, \lambda, s)$, for some scalar $\kappa \in \FF$. Set $\delta = \lambda - \kappa\,\id$. Then $\aaa \cong \ggg +\delta$, where  $\ggg +\delta$ is the Lie subaffgebra of $\aaa(\ggg(\delta); \kappa\,\id,\kappa\,\id, s+ (1-\kappa)\delta) $.
\end{proposition}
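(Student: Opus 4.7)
The plan is to realise $\aaa$ as an affine coset inside the Lie affgebra $\aaa(\ggg(\delta); \kappa\,\id, \kappa\,\id, s + (1-\kappa)\delta)$, using $a := \delta$ (i.e., the element $0 + 1\cdot\delta \in \ggg(\delta)$) as the base point and $\hhh := \ggg$ as the associated vector subspace, and then to appeal to Proposition~\ref{prop.sub.Lie}.

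Before anything else I would confirm that the ambient affgebra is well defined: since the two linear endomorphisms in its defining triple coincide, their difference is $0$, which is trivially a derivation, so Lemma~\ref{lem.gen}(1) ensures that the condition \eqref{kl} holds. Equivalently, $\kappa\,\id \in \mathrm{C}(\ggg(\delta)) \subseteq \mathrm{QC}(\ggg(\delta))$, and by Lemma~\ref{lem.gen}(2)(iii) the affgebra is even of derivation-type.

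Next I would verify the three conditions of Proposition~\ref{prop.sub.Lie} for $a = \delta$ and $\hhh = \ggg$. Condition (a) becomes $\kappa\delta - \delta + s + (1-\kappa)\delta = s \in \ggg$, which is precisely what forces the curious constant $s + (1-\kappa)\delta$ in the statement. Condition (b), $\kappa\,\id(\ggg)\subseteq \ggg$, is immediate. For condition (c), the bracket \eqref{Lie.ext} gives $\ad_\delta(x) = [\delta,x] = \delta(x)$ for every $x\in\ggg$ (use $\alpha = 1$, $\beta = 0$, $y = x$), so $(\kappa\,\id + \ad_\delta)(\ggg) \subseteq \ggg$. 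Proposition~\ref{prop.sub.Lie} then identifies $\ggg+\delta$ with the Lie affgebra $\aaa(\ggg; \kappa\,\id, \kappa\,\id + \ad_\delta|_\ggg, s)$.

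To finish I would read off from the same computation in (c) that $\kappa\,\id + \ad_\delta|_\ggg = \kappa\,\id + \delta = \lambda$, whence $\ggg + \delta \cong \aaa(\ggg; \kappa\,\id, \lambda, s) \cong \aaa$. No step requires heavy machinery; the only real subtlety is bookkeeping for the three distinct roles played by $\delta$, namely as an endomorphism of $\ggg$, as the basis vector of the one-dimensional extension $\ggg(\delta)$, and as the inner derivation $\ad_\delta$ on $\ggg(\delta)$, together with seeing that the shift by $(1-\kappa)\delta$ in the constant is precisely what makes the coset $\ggg + \delta$ stable under the affine bracket of the ambient affgebra.
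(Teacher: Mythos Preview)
Your proof is correct. The paper takes a more hands-on route: it writes out the bracket of $\aaa(\ggg(\delta);\kappa\,\id,\kappa\,\id,s+(1-\kappa)\delta)$ on general elements $a+\alpha\delta$, $b+\beta\delta$, compares with the bracket of $\aaa(\ggg;\kappa\,\id,\lambda,s)$, and then specialises $\alpha=\beta=1$ to see that $a\mapsto a+\delta$ is a Lie affgebra isomorphism. You instead invoke Proposition~\ref{prop.sub.Lie} with $a=\delta$, $\hhh=\ggg$, check its three hypotheses, and read off the resulting subaffgebra $\aaa(\ggg;\kappa\,\id,\kappa\,\id+\ad_\delta|_\ggg,s)=\aaa(\ggg;\kappa\,\id,\lambda,s)$. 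Your approach is more modular---it recycles the coset computation already packaged in Proposition~\ref{prop.sub.Lie} and makes transparent why the constant must be $s+(1-\kappa)\delta$---whereas the paper's direct calculation has the advantage of displaying the bracket on the whole of $\ggg(\delta)$, not just on the coset, which may be informative for readers interested in the ambient structure.
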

\begin{proof}
    The Lie bracket in $\aaa(\ggg(\delta); \kappa\,\id,\kappa\,\id, s+ (1-\kappa)\delta) $ comes out as
    $$
     \{a+\alpha\delta, b+\beta\delta\} = [a,b] + \alpha \delta(b) - \beta\delta(a) +\kappa b + s + (\kappa(\beta -1)+1)\delta.
    $$
    Since the Lie bracket in $\aaa(\ggg; \kappa\,\id, \lambda, s)$ is
    $$
    \{a,b\} = [a,b] +\kappa b +\delta(b-a) +s,
    $$
    we find that
    $$
    \{a+\alpha\delta, b+\beta\delta\} = \{a,b\} + (\alpha -1) \delta(b) + (1-\beta)\delta(a) + (\kappa(\beta-1)+1)\delta.
    $$
    In particular $\{a+\delta,b+\delta\} = \{a,b\} +\delta$ and the required isomorphism of Lie affgebras is $a\mapsto a +\delta$.
\end{proof}

\begin{remark}\label{rem.simple}
    In view of Proposition~\ref{prop.perfect.simple}, Proposition~\ref{prop.hull} applies to all Lie affgebras with simple fibres. Proposition~\ref{prop.perfect.simple} implies further that, over an algebraically closed field, up to isomorphism, $\lambda = \kappa$ and hence $\delta =0$. Thus, over an algebraically closed field, if $\aaa$ is fibred by a simple Lie algebra $\ggg$, then the Lie hull of $\aaa$ is isomorphic to the trivial extension of $\ggg$ by $\delta$ so that
    $$
    [a,\delta] =0,
    $$
    for all $a\in \ggg$.
\end{remark} 

\begin{proposition}\label{prop.noab}
    Let $\aaa$ be a Lie affgebra fibred by a Lie algebra $\ggg$ which does not contain a non-trivial abelian subalgebra. Then $\aaa$ is a derivation-type Lie affgebra satisfying the hypothesis of Proposition~\ref{prop.hull}.
\end{proposition}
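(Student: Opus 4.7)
The plan is to apply Theorem~\ref{thm.alg.aff} to write $\aaa$ in the standardised form $\aaa(\ggg;\kappa,\lambda,s)$, and then to use Lemma~\ref{lem.gen}(2) to convert the absence of non-trivial abelian subalgebras into a rigidity statement for $\kappa$.

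After fixing any $o\in\aaa$, Theorem~\ref{thm.alg.aff} provides $\kappa,\lambda\in\Lin(\ggg)$ satisfying \eqref{kl} and an element $s\in\ggg$ such that $\aaa\cong\aaa(\ggg;\kappa,\lambda,s)$. The crucial input is then Lemma~\ref{lem.gen}(2)(ii): for every $a\in\ggg$ the cyclic subspace $\ggg_\kappa(a)=\Span\{\kappa^n(a)\mid n\in\NN\}$ is an abelian Lie subalgebra of $\ggg$. The hypothesis on $\ggg$ is naturally read as forbidding abelian subalgebras of dimension at least two (since every one-dimensional subspace is automatically abelian), so each $\ggg_\kappa(a)$ is at most one-dimensional, i.e.\ $\kappa(a)\in\Span\{a\}$ for every $a\in\ggg$.

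Promoting this pointwise condition to $\kappa=\alpha\,\id$ for a single scalar $\alpha\in\FF$ is a routine linear-algebra step: writing $\kappa(a)=\alpha_a\, a$ for each non-zero $a$ and expanding $\kappa(a+b)=\alpha_{a+b}(a+b)$ as $\alpha_a\, a+\alpha_b\, b$ on any linearly independent pair forces $\alpha_a=\alpha_b$. In particular $\kappa\in\mathrm{C}(\ggg)$, so by Lemma~\ref{lem.gen}(2)(iii) the map $\delta:=\lambda-\kappa$ is a derivation of $\ggg$ and $\aaa$ is of derivation type. Since $\kappa$ is a scalar multiple of the identity, the hypothesis of Proposition~\ref{prop.hull} is also satisfied, and that proposition applies to $\aaa$.

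No substantial obstacle is anticipated: the argument rests entirely on Theorem~\ref{thm.alg.aff} and on parts (ii) and (iii) of Lemma~\ref{lem.gen}(2), linked by the one-line observation that a linear endomorphism preserving every line is a scalar operator. The only interpretive point is fixing the intended meaning of ``non-trivial abelian subalgebra''; once that is granted, the proof is immediate.
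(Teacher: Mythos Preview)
Your proof is correct and follows essentially the same route as the paper: both invoke Theorem~\ref{thm.alg.aff} and Lemma~\ref{lem.gen}(2)(ii) to force $\kappa(a)\in\FF a$ for every $a$, then conclude $\kappa$ is scalar. The only cosmetic difference is in that last step: the paper fixes a basis and uses $\kappa\in\mathrm{QC}(\ggg)$ together with $[a_i,a_j]\neq 0$ to equate the eigenvalues, whereas you use the standard linear-algebra fact that an operator preserving every line is scalar---both are immediate.
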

\begin{proof}
 Let $\aaa\cong \aaa(\ggg; \kappa, \lambda, s)$. Take any basis $\{a_i\}$ of $\aaa$ and consider vector spaces $\ggg_\kappa(a_i)$ spanned by  $\{\kappa^n(a_i)\; | \; n\in \NN\}$. In view of Lemma~\ref{lem.gen}, each $\ggg_\kappa(a_i)$ is an abelian subalgebra of $\ggg$. Therefore, by hypothesis, $\ggg_\kappa(a_i)$ are one-dimensional spaces, and there exist $\kappa_i \in \FF$, such that $\kappa(a_i) = \kappa_i a_i$. Next, since $\kappa \in \mathrm{QC}(\ggg)$,  $\kappa_i[a_i,a_j] = \kappa_j[a_i,a_j]$. If $i\neq j$, then again by the hypothesis $[a_i,a_j]\neq 0$, and so $\kappa_i=\kappa_j$, for all $i,j$. This proves that $\kappa$ is necessarily a multiple of the identity, and the statement follows.
\end{proof}
\begin{corollary}\label{cor.hull}
    Let $\aaa$ be a Lie affgebra. The following statements are equivalent:
    \begin{zlist}
        \item The bracket $\{-,-\}$ is an idempotent operation, i.e., $\{a,a\} =a$, for all $a\in\aaa$.
     \item There exist a Lie algebra $\ggg$ and a derivation $\delta$ of $\ggg$ such that $\aaa \cong \aaa (\ggg; \id,  \delta+\id, 0)$.
        \item For any $o\in \aaa$, there is a derivation $\delta$  of the tangent Lie algebra $\Tan o \aaa$ such that $\aaa \cong \Tan o \aaa + \delta$, where $\Tan o \aaa + \delta$ is a Lie subaffgebra of a Lie affgebra $\aaa(\delta)\cong \aaa\times \FF$ with the bracket 
        $$
        \{x, y\} = [x, y] + y,
        $$ 
        for all $x,y\in (\Tan o \aaa) (\delta)$.
    \end{zlist}
\end{corollary}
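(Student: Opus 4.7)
The plan is to establish the cyclic chain $(1) \Rightarrow (2) \Rightarrow (3) \Rightarrow (1)$, leaning on the classification of Lie affgebras provided by Theorem~\ref{thm.alg.aff}, the centroid characterisation in Lemma~\ref{lem.gen}(2)(iii), and the hull construction of Proposition~\ref{prop.hull}.

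For $(1) \Rightarrow (2)$, I fix any $o \in \aaa$ and invoke Theorem~\ref{thm.alg.aff} to represent $\aaa \cong \aaa(\ggg; \kappa, \lambda, s)$ with $\ggg = \Tan o \aaa$. By formula \eqref{aff.bra}, the idempotence condition $\{a,a\} = a$ reads $\kappa(a) + s = a$ for all $a \in \ggg$. Setting $a = 0$ yields $s = 0$, whence $\kappa = \id$. Since $\id$ trivially belongs to $\mathrm{C}(\ggg)$, Lemma~\ref{lem.gen}(2)(iii) ensures that $\delta := \lambda - \id$ is a derivation of $\ggg$; consequently $\lambda = \delta + \id$, giving the presentation in (2).

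For $(2) \Rightarrow (3)$, I apply Proposition~\ref{prop.hull} to $\aaa \cong \aaa(\ggg; \id, \delta + \id, 0)$ with scalar $\kappa = 1$ and $s = 0$, obtaining $\aaa \cong \ggg + \delta$ as a Lie subaffgebra of $\aaa(\ggg(\delta); \id, \id, 0)$. A direct evaluation of \eqref{aff.bra} together with \eqref{Lie.ext} on the latter yields the bracket $\{x,y\} = [x,y] + y$, as required. Since any tangent Lie algebra $\Tan o \aaa$ is isomorphic to $\ggg$, transporting $\delta$ along this isomorphism produces a derivation of $\Tan o \aaa$ satisfying the conditions of (3) for the chosen $o$.

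For $(3) \Rightarrow (1)$, the ambient bracket satisfies $\{x,x\} = [x,x] + x = x$, so it is idempotent; idempotence is preserved under isomorphism and passes to any Lie subaffgebra, and therefore $\aaa$ itself has an idempotent bracket. There is no serious obstacle in any of the three steps: the deepest tool needed, the equivalence of $\delta$ being a derivation with $\kappa$ lying in the centroid, is already packaged in Lemma~\ref{lem.gen}(2)(iii), and Proposition~\ref{prop.hull} supplies the explicit hull construction. The only mild subtlety is the $o$-dependence of the representation in (3), but this is resolved by transport along the canonical isomorphisms between tangent fibres.
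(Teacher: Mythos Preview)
Your proof is correct and follows essentially the same route as the paper: both deduce $\kappa=\id$, $s=0$ from idempotence via the bracket formula \eqref{aff.bra}, invoke the centroid criterion to see that $\delta=\lambda-\id$ is a derivation, and then feed these data into Proposition~\ref{prop.hull} to obtain the hull description, with $(3)\Rightarrow(1)$ immediate. The only cosmetic difference is that the paper handles the ``for any $o$'' in (3) by re-applying Theorem~\ref{thm.alg.aff} at the given $o$, whereas you transport $\delta$ along the fibre isomorphism; both are equivalent.
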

    
\begin{proof}
    By Theorem~\ref{thm.alg.aff}, $\aaa \cong \aaa(\ggg;\kappa,\lambda,s)$. In view of \eqref{aff.bra}, $\{a,a\} = a$, for all $a\in \aaa$ if and only if
    $$
    a = \kappa(a) +s.
    $$
    Since $\kappa$ is a linear endomorphism, $s=0$ and $\kappa(a) = a$, and hence $\aaa$ is a derivation-type Lie affgebra with derivation $\delta = \lambda -\id $. This establishes the equivalence of (1) and (2). 

    Assuming (2),  by Theorem~\ref{thm.alg.aff}, given any $o\in \aaa$ we can choose $\ggg = \Tan o \aaa$. Then $\aaa(\delta) = \aaa(\ggg;\id,\id,0)$ and (3) follows by Proposition~\ref{prop.hull}. The implication $(3)\implies (1)$ is immediate.
\end{proof}

\begin{example}\label{ex.act.hull}
    The Lie bracket \eqref{action.bracket} of the Lie affgebra $\aaa_{\zeta,\id}$ described in Example~\ref{ex.Lie.Lie} is an idempotent operation by \eqref{idem}, and hence Corollary~\ref{cor.hull} can be applied. The map $\lambda$ comes out as $\lambda(a) = \zeta a$, so that $\delta(a) = (\zeta-1)a$ and the Lie bracket of the hull of $\aaa_{\zeta,\id}$ reads
    $$
    [a+\alpha \delta, b+\beta \delta] = (\zeta -1)(\alpha b - \beta a),
    $$
    for all $a,b\in \aaa$ and $\alpha, \beta \in \FF$.
\end{example}
 \section{Examples}\label{sec.ex}

\subsection{One-dimensional Lie affgebras}\label{sec.one.dim}

All one-dimensional Lie affgebras must have the unique one-dimensional abelian Lie algebra as fibres. Let $e$ be a fixed basis of $\ggg$. There are no restrictions on $\lambda$ and $\kappa$, which are both scalar multiples of identity, say $\lambda(e) = \lambda e$ and $\kappa(e) = \kappa e$, $\kappa,\lambda \in \FF$, and are not affected by an automorphism $\Psi$ of $\ggg$, which itself is a non-zero scalar multiple of identity, say $\Psi(e) = \psi e$. The adjoint action is trivial. Let $s=\sigma e$ and $q=\xi e$. Then $s$ can be transferred to
$$
s' = \psi(\sigma + (1-\kappa)\xi)e.
$$
If $\kappa\neq 1$, $\xi$ can be chosen so that $s'=0$. If $\kappa =1$ then either $\sigma =0$ or it can be normalised to $1$ by choosing $\psi$. Therefore, there are two families of non-isomorphic Lie affgebra structures on a one dimensional (affine) space with corresponding brackets
$$
 \{a,b\}  =\begin{cases}
    ( \kappa-\lambda)a + \lambda b & \mbox{or}\\
    (1 - \lambda)a + \lambda b +e.
\end{cases}
$$
All these are derviation-type Lie affgebras which fulfil the hypothesis of Proposition~\ref{prop.hull}. Since the derivations are $\delta(e) = (\lambda - \kappa)e,$ the Lie hulls are spanned by $e$ and $\delta$ with the bracket
$$
[e,\delta] = (\kappa - \lambda)e,
$$
and thus exhaust all non-isomorphic classes of two-dimensional Lie algebras: either $[e,\delta] = 0$ or $[e,\delta] =e$.

\subsection{Lie affgebras with tangent abelian Lie algebras}\label{sec.ex.ab}
 This is more of a non-example rather than an example, but we include this discussion here as an illustration of the richness of the variety of Lie affgebras fibred by the same Lie algebra. We take $\ggg$ to be an $n$-dimensional vector space with the trivial bracket $[a,b]=0$. The conditions \eqref{kl} and \eqref{centroid} are empty, so any linear endomorphisms $\kappa$ and $\lambda$ and a vector $s\in \ggg$ give a derivation-type Lie affgebra $\aaa(\ggg;\kappa,\lambda,s)$, with the bracket
 \begin{equation}\label{ab}
  \{a,b\} = \kappa(a) +\lambda(b-a) +s,  
 \end{equation}
  for all $a,b\in \ggg$.

 Any linear automorphism $\Psi$ of $\ggg$ is a Lie algebra homomorphism, the adjoint action is trivial and hence the equivalence classes of Lie affgebras are controlled by the general similarity relation between  $\kappa$ and $\kappa'$ or $\lambda$ and $\lambda'$, and on the multiplicity of eigenvalue 1 of $\kappa$ or, equivalently, the rank of $\id - \kappa$. If this rank  is equal to $n$, then $q$ can be chosen so that $s'=0$ and thus the constant term from \eqref{ab} can be removed. If the rank is equal to $r$, then the constant term will depend on $n-r$ free parameters.

 \subsection{Two dimensional Lie affgebras with non-abelian fibres}\label{sec.ex.xy}

 Let $\bbb$ be the Borel subalgebra of $\sss\lll(2,\CC)$, that is, $\bbb$ is the 2-dimensional Lie algebra with a basis $E: e_1,e_2$ and the bracket $[e_1,e_2] = e_1$. By Proposition~\ref{prop.noab} $\kappa$ is necessarily a scalar multiple of identity, so that $\kappa\in \mathrm{C}(\bbb)$. Therefore, $\delta =\lambda - \kappa$ is a derivation by Lemma~\ref{lem.gen}. One easily checks that all derivations of $\bbb$ are inner, and hence $\lambda = \kappa + \ad_q$, where  $q\in \bbb$. In view of this and equation \eqref{ll'}  in Corollary~\ref{cor.class}, any Lie affgebra over $\bbb$ is isomorphic to one with $\lambda = \kappa = \gamma\, \id$, where $\gamma \in \CC$. 
 
 Since $\kappa$ and $\lambda$ are scalar multiples of identity, they are preserved by any automorphism $\Psi$ of $\bbb$. One easily finds that, in the basis $E$,
 $$
\Psi(a,b) =  \begin{pmatrix}
    a & b \cr 0 & 1
\end{pmatrix},
$$
where $a,b\in \CC$, $a\neq 0$. With no loss of generality we can choose $q=0$ in \eqref{ss'}, so that $s' = \Psi(s)$, or in the basis $E$,
$$
\begin{pmatrix}
    s'_1  \cr  s'_2
\end{pmatrix} = \begin{pmatrix}
    a & b \cr 0 & 1
\end{pmatrix} \begin{pmatrix}
    s_1  \cr  s_2
\end{pmatrix}
=
\begin{pmatrix}
       as_1+bs_2 \cr s_2 
    \end{pmatrix}.
    $$
If $s_2\neq 0$, then $b$ can be chosen so that $s'_1=0$. Otherwise, either $s_1=0$ and hence $s'=0$ or $a$ can be chosen in such a way that $s'_1 =1$. Put together we obtain the following two families of  non-isomorphic Lie affgebras
 $$
 \begin{aligned}
    \aaa(\bbb; \gamma\, \id,\gamma\, \id, \sigma e_2) \; &: \quad \{x,y\} = [x,y] +\gamma y +\sigma e_2, \\
    \aaa(\bbb; \gamma\, \id,\gamma\, \id, e_1) \; &: \quad \{x,y\} = [x,y] +\gamma y +e_1 ,
 \end{aligned}
  $$
 where $\gamma,\sigma \in \CC$. As up to isomorphism $\bbb$ is the only non-abelian two-dimensional Lie algebra, this classifies (up to isomorphism) all two-dimensional Lie affgebras with a non-abelian tangent Lie algebra.

 By Proposition~\ref{prop.noab}, all these affgebras are derivation-type affgebras that fulfil hypothesis of Proposition~\ref{prop.hull}, and since $\lambda =\kappa$ the derivation $\lambda-\kappa$ is trivial. Consequently, the Lie algebra hull is spanned by $e_1,e_2,\delta$, with the bracket
 $$
 [e_1,e_2]= e_1, \qquad [\delta,e_1]= 0, \qquad [\delta,e_2] =0.
 $$
 
  \subsection{Lie affgebras fibred by $\sss\lll(2,\CC)$}\label{sec.ex.sl2} 
  Let us write the $\mathfrak{sl}(2,\CC)$ algebra in the Chevalley basis $C: e,h,f$,
  \begin{equation}\label{sl2.rel}
      [e,f] = h, \quad [h,e] = 2e, \quad [h,f]=-2f.
  \end{equation}
  Since $\sss\lll(2,\CC)$ is a simple Lie algebra over an algebraically closed field, by Proposition~\ref{prop.perfect.simple} every Lie affgebra with $\sss\lll(2,\CC)$ as a fibre is isomorphic to one with the bracket
$$
\{x,y\} = [x,y] +\gamma y +s, \qquad \gamma\in \CC, s\in \sss\lll(2,\CC).
$$
Equivalence classes corresponding to different choices of $s$ are controlled by the automorphisms of $\sss\lll(2,\CC)$ through \eqref{ss'}. Any such automorphism is given by the conjugation with an element of the Lie group $\mathrm{SL}(2,\CC)$, and thus, 
$$
\begin{pmatrix}
    s'_{2} & s'_{1}\cr s'_{3} & -s'_{2}
\end{pmatrix}\sim \begin{pmatrix}
    s_{2} & s_{1}\cr s_{3} & -s_{2}
\end{pmatrix},
$$
 provided there exists $u\in \mathrm{SL}(2,\CC)$ such that
$$
\begin{pmatrix}
    s'_{2} & s'_{1}\cr s'_{3} & -s'_{2}
\end{pmatrix} = u^{-1} \begin{pmatrix}
    s_{2} & s_{1}\cr s_{3} & -s_{2}
\end{pmatrix} u.
$$
First consider,
$$
\begin{pmatrix}
    1 & a\cr 0 & 1
\end{pmatrix}^{-1}\begin{pmatrix}
    s_{2} & s_{1}\cr s_{3} & -s_{2}
\end{pmatrix}\begin{pmatrix}
    1 & a\cr 0 & 1
\end{pmatrix} = \begin{pmatrix}
    s_2-as_3 & 2as_2 + s_1 -a^2 s_3 \cr s_3 & -s_2+as_3
\end{pmatrix}.
$$
If $s_3\neq 0$ diagonal terms can be eliminated by a suitable choice of $a$. If $s_3=0$ we can obtain a diagonal matrix (as long as $s_2\neq 0$) or a strictly upper triangular one. Put together we obtain that
$$
s'= \begin{pmatrix}
    \sigma  & 0\cr 0 & -\sigma
\end{pmatrix} = \sigma\, h \quad \mbox{or} \quad s' = \begin{pmatrix}
    0  & \sigma_1 \cr \sigma_2 & 0
\end{pmatrix}.
$$
The second case can be reduced further by applying
$$
\begin{pmatrix}
    b & 0\cr 0 & b^{-1}
\end{pmatrix}^{-1}\begin{pmatrix}
    0  & \sigma_1 \cr \sigma_2 & 0
\end{pmatrix}\begin{pmatrix}
    b & 0\cr 0 & b^{-1}
\end{pmatrix} = \begin{pmatrix}
    0  & b^{-2}\sigma_1 \cr b^2 \sigma_2 & 0
\end{pmatrix}.
$$
Depending on the nullity of the $\sigma_i$ in addition to the zero matrix we obtain three possibilities
$$
\begin{pmatrix}
    0 & 1\cr 0 & 0
\end{pmatrix} = e, \qquad \begin{pmatrix}
    0 & 0\cr 1 & 0
\end{pmatrix}  =f, \qquad \begin{pmatrix}
    0 & 1\cr \sigma^2 & 0
\end{pmatrix}, \qquad \sigma\neq 0.
$$
The last matrix belongs to the diagonal class, since
$$
\begin{pmatrix}
    \sigma & 0 \cr 0 & -\sigma
\end{pmatrix} = \begin{pmatrix}
    1 & -\frac{1}{2\sigma} \cr \sigma & \frac{1}{2}
\end{pmatrix}^{-1}\begin{pmatrix}
0 & 1 \cr \sigma^2  & 0
\end{pmatrix}\begin{pmatrix}
    1 & -\frac{1}{2\sigma} \cr \sigma & \frac{1}{2}
\end{pmatrix}.
$$
Therefore isomorphism classes of Lie affgebras with the tangent Lie algebra $\sss\lll(2,\CC)$ split into three cases
$$
\begin{aligned}
    \aaa (\sss\lll(2,\CC); \gamma\, \id, \gamma\, \id, e)\, : &  \quad  \{x,y\} = [x,y] +\gamma y + e, \cr 
    \aaa (\sss\lll(2,\CC); \gamma\, \id, \gamma\, \id, \sigma h) \, : & \quad \{x,y\} = [x,y] +\gamma y +  \sigma h, \cr
    \aaa (\sss\lll(2,\CC); \gamma\, \id, \gamma\, \id, f) \, : & \quad \{x,y\} = [x,y] +\gamma y +  f,
    \end{aligned}
$$
where $\gamma, \sigma \in \CC$.

 \subsection{Lie affgebras fibred by $\sss\ooo(3)$.}\label{sec.ex.so3} Let $x_1,x_2,x_3$ be the standard basis of the real Lie algebra $\sss\ooo(3)$, so that,
 $$
 [x_i,x_j] = \varepsilon_{ijk}x_k,
 $$
 where $\varepsilon_{ijk}$ is the antisymmetric Levi-Civita symbol. 
 Since $\sss\ooo(3)$ has no non-trivial abelian subalgebras,
$\kappa$ is a multiple of identity by Proposition~\ref{prop.noab}. Since $\sss\ooo(3)$ is a simple  Lie algebra, all derivations are inner, which allows one to choose $\lambda = \kappa = \gamma \id$, where $\gamma \in \RR$. 

The automorphisms of $\sss\ooo(3)$ are given by the conjugation by elements of the rotation group $SO(3)$. By suitably rotating the axes, one can transform a general element of $\sss\ooo(3)$ to  $\sigma x_1$, with the uniquely determined $\sigma \in \RR$. Thus any real Lie affgebra fibred by $\sss\ooo(3)$ is isomorphic to one with the bracket, for all $x,y\in \sss\ooo(3)$,
$$
\{x,y\} = [x,y] +\gamma y + \sigma x_1,
$$
where $\gamma,\sigma \in \RR$. 

\subsection{Normalised affine matrices}\label{sec.ex.gna}
The Lie affgebras of general and special normalised affine matrices were introduced in \cite{Brz:spe} as affine subspaces of $\ggg\lll(n+1,\FF)$ and $\sss\lll (n+1,\FF)$, respectively given by
$$
\begin{aligned}
    \ggg\nnn\aaa(n,\FF) &:= \{X\in \ggg\lll(n+1,\FF)\; |\; XE=EX=E\}, \\  \sss\nnn\aaa(n,\FF) &:= \{X\in \sss\lll(n+1,\FF)\; |\; XE=EX=E\},
\end{aligned}
$$
where $E$ is the matrix in which all entries are equal to 1, that is
(the sum of all $(n+1)\times(n+1)$ matrix units $E_{ij}$). In other words $\ggg\nnn\aaa(n,\FF)$ and $\sss\nnn\aaa(n,\FF)$ contain matrices normalised so that the sum of all entries in each row and column is equal to 1. It has been shown in \cite{BrzRad:mat} that, in the terminology of the present text, 
$$
\ggg\nnn\aaa(n,\FF) \cong \aaa(\ggg\lll_0(n,\FF); \id,\id, 0), \qquad \sss\nnn\aaa(n,\FF) \cong \aaa(\sss\lll_0(n,\FF); \id,\id, 0),
$$
where the Lie algebras are defined as subalgebras of $\ggg\lll(n+1,\FF)$ and  $\sss\lll (n+1,\FF)$, respectively, by
$$
\begin{aligned}
    \ggg\lll_0(n,\FF) &:= \{X\in \ggg\lll(n+1,\FF)\; |\; XE=EX=0\}, \\  \sss\lll_0(n,\FF) &:= \{X\in \sss\lll(n+1,\FF)\; |\; XE=EX=0\}.
\end{aligned}
$$
Furthermore, if the characteristic of $\FF$ does not divide $n$ or $n+1$, $\ggg\nnn\aaa(n,\FF)$ and $\sss\nnn\aaa(n,\FF)$ fit into  Proposition~\ref{prop.sub.Lie} with $\hhh \cong \ggg\lll_0(n,\FF)\cong \ggg\lll(n, \FF)$ and $\hhh \cong \sss\lll_0(n,\FF)\cong\sss\lll(n, \FF)$, respectively. Thus
$$
\ggg\nnn\aaa(n,\FF) \cong \aaa(\ggg\lll(n, \FF);\id,\id,0), \qquad \sss\nnn\aaa(n,\FF) \cong \aaa(\sss\lll(n, \FF);\id,\id,0).
$$
The proof of these isomorphisms given in \cite[Theorem~3.2]{BrzRad:mat} is based on constructing explicit similarity transformations whose existence relies on the fact that $n+1$ is not a multiple of the characteristic of $\FF$. Here we show that indeed this assumption is necessary. 

Assume that  $\mathrm{char}\FF \mid  n+1$. Obviously, $\ggg\nnn\aaa(n,\FF) = I_{n+1} + \ggg\lll_0(n,\FF)$, where $I_{n+1}$ is the identity matrix, and the conditions of Proposition~\ref{prop.sub.Lie} are satisfied with $\hhh \cong \ggg\lll_0(n,\FF)$. Since $\tr I_{n+1} =0$, $I_{n+1}\in \sss\nnn\aaa(n,\FF)$, and so $\sss\nnn\aaa(n,\FF) = I_{n+1} + \sss\lll_0(n,\FF)$. Furthermore, $E\in \sss\lll_0(n,\FF)$ is a central element. On the other hand $\sss\lll(n,\FF)$ has a trivial centre, and so $\sss\lll_0(n,\FF)\not\cong\sss\lll(n,\FF)$ as Lie algebras. By Corollary~\ref{cor.non.is}, 
\begin{equation}\label{not.sln}
    \sss\nnn\aaa(n,\FF) \not\cong \aaa(\sss\lll(n, \FF);\id,\id,0).
\end{equation}
Next, consider the matrices
$$
A_{i\,j} = E_{i\,j} -E_{i\,n+1} - E_{n+1\, j} + E_{n+1\, n+1},
$$
$i,j = 1,\ldots n$, which form a basis for $\ggg\lll_0(n,\FF)$. Since the characteristic of $\FF$ is a factor of $n+1$, 
$
E = \sum_{i,j} A_{i\, j},
$
and
$$
\sum_{i\neq j} [ A_{i\, i}, A_{i\, j}] = \sum_{i\neq j}\left(2A_{i\,j} - A_{i\,i}\right) = 2\sum_{i,j} A_{i\,j} =2E.
$$
Thus, if $\mathrm{char}\,\FF \neq 2$, $E\in [\ggg\lll_0(n,\FF),\ggg\lll_0(n,\FF)]$. In the case of the even characteristic one finds that
$$
[\ggg\lll_0(n,\FF),\ggg\lll_0(n,\FF)]\ni \sum_{i<j}[A_{i\,i},A_{j\,j}] = \sum_{i<j}\left(A_{i\,j}+ A_{j\,i}\right) = E - I_{n+1}.
$$
Therefore, irrespective of the parity of $\mathrm{char}\,\FF$,  $[\ggg\lll_0(n,\FF),\ggg\lll_0(n,\FF)]$ has non-trivial intersection with the centre of $\ggg\lll_0(n,\FF)$. Since there are no non-trivial central elements of $\ggg\lll(n,\FF)$ that are also in $[\ggg\lll(n,\FF),\ggg\lll(n,\FF)]$, we conclude that
\begin{equation}\label{not.gln}
\ggg\nnn\aaa(n,\FF) \not\cong \aaa(\ggg\lll(n, \FF);\id,\id,0). 
\end{equation}

We now look at $\sss\nnn\aaa(n,\FF)$ in case $\mathrm{char}\FF \mid  n$. Let 
$$
A = E_{12}+E_{23}+\ldots + E_{n\,n+1}+ E_{n+1\,1}\in \sss\nnn\aaa(n,\FF).
$$
Since $AE-EA =0$, for all $X\in \sss\lll_0(n,\FF)$, $AX-XA\in \sss\lll_0(n,\FF)$, and hence 
$$
\sss\nnn\aaa(n,\FF) = A + \sss\lll_0(n,\FF) \cong \aaa(\sss\lll_0(n,\FF); \id,\id+\ad_A, 0),
$$
by Proposition~\ref{prop.sub.Lie}. 
Following \cite{BrzRad:mat} we can introduce the 
$(n+1)\times (n+1)$-matrix
     \begin{equation}\label{mat.P}
         {P} = 
         \begin{pmatrix}
             1 & 1 & 1 &\cdots & 1 & 1 \cr
             0 & 0 & 0 &\cdots & -1 & 1 \cr
             \cdots  & \cdots  & \cdots &\cdots & \cdots & \cdots \cr
             0 & 0 & -1 &\cdots & 0 & 1 \cr
             0 & -1 & 0 &\cdots & 0 & 1 \cr
             -1 & 0 & 0 &\cdots & 0 & 1 \cr
         \end{pmatrix}.
     \end{equation}
Since  $\mathrm{char}(\FF)\nmid n+1$, ${P}$ is invertible with the inverse
     $$
    {P}^{-1} =  \begin{pmatrix}
             1 & 1 & 1 &\cdots & 1 & 0  \cr
             1 & 1 & 1 &\cdots & 0 & 1  \cr
             \cdots  & \cdots  & \cdots &\cdots & \cdots & \cdots  \cr
             1 & 1 & 0 &\cdots & 1 & 1  \cr
             1 & 0 & 1 &\cdots & 1 & 1 \cr
             1 & 1 & 1& \cdots &1 & 1
         \end{pmatrix}.
        $$

As shown in \cite{BrzRad:mat}, for all $X\in \sss\lll_0(n,\FF)$, 
$$
P^{-1}XP \in \begin{pmatrix} \sss\lll(n,\FF) & 0\cr 0 & 0
    \end{pmatrix}.
$$
Furthermore,
$$
B := P^{-1}AP = \begin{pmatrix}
    -1 & -1 & -1 & \cdots &-1 & -1 & 0 \cr
    1 & 0 & 0 & \cdots & 0& 0 & 0 \cr
    0 & 1 & 0 & \cdots & 0 & 0 & 0 \cr
    \cdots & \cdots & \cdots & \cdots & \cdots & \cdots  & \cdots \cr
     0 & 0 & 0 & \cdots &1 & 0 & 0 \cr
     0 & 0 & 0 & \cdots &0 & 0 & 1
\end{pmatrix}.
$$
Using the similarity transformation given by $P$ we thus obtain,
$$
\sss\nnn\aaa(n,\FF) \cong  B + \begin{pmatrix} \sss\lll(n,\FF) & 0\cr 0 & 0
    \end{pmatrix} \cong \aaa\left(\begin{pmatrix} \sss\lll(n,\FF) & 0\cr 0 & 0
    \end{pmatrix}; \id,\id+\ad_B, 0\right).
$$
We will now show that 
$$\sss\nnn\aaa(n,\FF)\not\cong\aaa\left( \sss\lll(n,\FF) ; \id,\id, 0\right)\cong \aaa\left(\begin{pmatrix} \sss\lll(n,\FF) & 0\cr 0 & 0
    \end{pmatrix}; \id,\id, 0\right). 
$$
In view of Corollary~\ref{cor.class} the affgebras are isomorphic if and only if there exists $Q\in \begin{pmatrix} \sss\lll(n,\FF) & 0\cr 0 & 0
    \end{pmatrix}$ such that, for all $X\in \begin{pmatrix} \sss\lll(n,\FF) & 0\cr 0 & 0
    \end{pmatrix}$, $[B+Q,X]=0$. Since $Z(\sss\lll(n,\FF)) = \FF I_n$, and  $B$ has  $1$ as the $(n+1,n+1)$-entry, there is no $Q$ such that 
    $$
    B+Q\in \begin{pmatrix} Z(\sss\lll(n,\FF)) & 0\cr 0 & 0
    \end{pmatrix}.
    $$
    Consequently,
    $$
    \sss\nnn\aaa(n,\FF)\not\cong\aaa\left( \sss\lll(n,\FF) ; \id,\id, 0\right). 
    $$

\section{An application: central extensions of Lie affgebras}\label{sec.appl}
Let $\ggg$ be a Lie algebra. Recall that an antisymmetric bilinear form $\pi: \ggg\times \ggg \to \FF$ is  a {\em 2-cocycle} (in the Chevalley-Eilenberg complex \cite{CheEil:coh}) provided
\begin{equation}\label{2-coc}
    \pi(a,[b,c]) + \pi(b,[c,a])+\pi(c,[a,b]) =0,
\end{equation}
for all $a,b,c\in \ggg$. Given a 2-cocycle $\pi$ on $\ggg$ one can define the {central extension} of $\ggg$ by $\pi$ as a vector space $\ggg\oplus \FF\langle z\rangle$ with the Lie bracket, for all $a,b\in \ggg$ and $\alpha,\beta \in \FF$,
$$
[a+\alpha z, b+\beta z] = [a,b]+\pi(a,b)z;
$$
see \cite{CheEil:coh}.
The central extension by a 2-cocycle $\pi$ is denoted by $\ggg(\pi)$.  In this section we develop central extensions of Lie affgebras, by exploring the structure of $\aaa(\ggg;\kappa,\lambda,s)$. 

\begin{lemma}\label{lem.c.ext}
    Let $\aaa$ be a Lie affgebra, and let $\omega: \aaa\times\aaa\to\FF$ be a bi-affine map such that, for all $a,b,c\in \aaa$
    \begin{subequations}\label{aff.coc}
      \begin{equation}\label{antisym.coc}
        \omega(a,b) - \omega(a,a) +\omega(b,a) = \omega(b,b),
    \end{equation}
    \begin{equation}\label{Jacobi.coc}
         \omega(a,\{b,c\})-\omega(a,\{a,a\})+ \omega(b,\{c,a\})- \omega(b,\{b,b\}) + \omega(c,\{a,b\}) = \omega(c,\{c,c\}).
    \end{equation}
        \end{subequations}
    Then the product affine space $\aaa\times \FF$ is a Lie affgebra with the bracket
    \begin{equation}\label{aff.coc.bracket}
        \{(a,\alpha),(b,\beta)\} = \left(\{a,b\},\omega(a,b)\right),
    \end{equation}
    for all $a,b\in \aaa$ and $\alpha,\beta \in \FF$. We denote this Lie affgebra by $\aaa(\omega)$ and call it a {\em cocycle extension} of $\aaa$.
\end{lemma}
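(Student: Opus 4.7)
The plan is to verify the three defining properties of a Lie affgebra bracket (conditions (a)--(c) in Definition~\ref{def.Lie}) for the operation \eqref{aff.coc.bracket} on the product affine space $\aaa \times \FF$, and to observe that each property splits into a first-coordinate statement, which holds because $\{-,-\}$ is already a Lie affgebra bracket on $\aaa$, and a second-coordinate statement, which is precisely one of the hypotheses on $\omega$.

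First, I would recall that the product affine space $\aaa \times \FF$ carries componentwise heap and affine action structures, so that a map $\aaa \times \FF \to \aaa \times \FF$ of the form $(b,\beta) \mapsto (g(b), h(b))$ with $g \in \mathrm{Aff}(\aaa,\aaa)$ and $h \in \mathrm{Aff}(\aaa,\FF)$ is automatically affine (the second argument enters trivially). Fixing $(a,\alpha) \in \aaa \times \FF$ and letting $(b,\beta)$ vary, the bracket \eqref{aff.coc.bracket} has first component $\{a,-\}$, affine by Definition~\ref{def.Lie}(a) applied to $\aaa$, and second component $\omega(a,-)$, affine by the assumed bi-affinity of $\omega$. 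The same reasoning, fixing the second argument, handles $\{-,(b,\beta)\}$. This establishes property (a).

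Next, for affine antisymmetry, I would compute the left-hand side of \eqref{antisym} componentwise:
$$
\begin{aligned}
\langle \{(a,\alpha),(b,\beta)\}, \{(a,\alpha),(a,\alpha)\}, \{(b,\beta),(a,\alpha)\}\rangle
&= \bigl(\langle \{a,b\},\{a,a\},\{b,a\}\rangle, \; \langle \omega(a,b), \omega(a,a), \omega(b,a)\rangle\bigr)\\
&= \bigl(\{b,b\},\; \omega(a,b) - \omega(a,a) + \omega(b,a)\bigr),
\end{aligned}
$$
using the heap-componentwise description and the affine antisymmetry \eqref{antisym} of $\{-,-\}$ in the first coordinate. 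Equality with $\{(b,\beta),(b,\beta)\} = (\{b,b\}, \omega(b,b))$ is then precisely the cocycle antisymmetry \eqref{antisym.coc}. The affine Jacobi identity \eqref{Jacobi} for $\{-,-\}_{\aaa\times\FF}$ is verified in the same componentwise fashion: the first coordinate of the five-fold heap combination reduces, by \eqref{Jacobi} for $\{-,-\}$, to $\{c,\{c,c\}\}$, while the second coordinate reduces to
$$
\omega(a,\{b,c\})-\omega(a,\{a,a\})+\omega(b,\{c,a\})-\omega(b,\{b,b\})+\omega(c,\{a,b\}),
$$
which equals $\omega(c,\{c,c\})$ by hypothesis \eqref{Jacobi.coc}.

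There is no genuine obstacle: since the second component of the bracket depends only on the $\aaa$-coordinates, the scalar parameters $\alpha,\beta$ play no role, and each axiom decouples cleanly into the corresponding axiom for $\{-,-\}$ on $\aaa$ and the corresponding cocycle condition on $\omega$. The only minor point requiring attention is to state the product affine space structure explicitly (so that the componentwise evaluation of heap and action operations is justified) and to confirm that the two-sided affine property of $\omega$ is what makes the bracket bi-affine on $\aaa \times \FF$.
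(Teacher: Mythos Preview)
Your proof is correct and follows essentially the same approach as the paper: the paper's one-line argument simply observes that conditions \eqref{aff.coc} are precisely the second-coordinate statements of affine antisymmetry and the Jacobi identity for the bracket \eqref{aff.coc.bracket}, which is exactly the componentwise decomposition you carry out in detail.
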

\begin{proof}
    The statement follows immediately from the observation that equations \eqref{aff.coc} express the affine antisymmetry and the Jacobi identity of the second factor in $\aaa\times \FF$.
\end{proof}

\begin{proposition}\label{prop.coc}
    Let $\aaa=\aaa(\ggg;\kappa,\lambda,s)$ and $\delta = \lambda-\kappa$. Then $\omega:\aaa\times\aaa\to \FF$ satisfies conditions \eqref{aff.coc} if and only if there exist a 2-cocycle $\pi$ on $\ggg$, linear forms  $\rho,\sigma$ on $\ggg$, and $\tau\in \FF$ such that
    \begin{equation}\label{omega.pi}
        \omega(a,b) = \pi(a,b)+\rho(a)+\sigma(b)+\tau
    \end{equation}
    and
    \begin{equation}\label{coc.deriv}
        \sigma([a,b]) = \pi(a,\delta(b))+\pi(\lambda(a),b).
    \end{equation}

    Furthermore,
    \begin{equation}\label{iso.omega.pi}
        \aaa(\omega)\cong \aaa(\ggg(\pi);\hat\kappa,\hat\lambda,\hat s),  
    \end{equation}
    where
    \begin{equation}\label{data.omega.pi}
        \hat\kappa(a+\alpha z) = \kappa(a)+ (\rho(a)+\sigma(a))z, \qquad \hat\lambda(a+\alpha z) = \lambda(a)+ \sigma(a)z,\quad \hat s = s+\tau z.
    \end{equation}
\end{proposition}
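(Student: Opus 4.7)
The plan is to split the proposition into two parts: first the characterisation of $\omega$ by the data $(\pi,\rho,\sigma,\tau)$ subject to \eqref{coc.deriv} and the Chevalley--Eilenberg cocycle condition on $\pi$, and second the identification \eqref{iso.omega.pi}. For the first part, I would fix the origin $0\in \aaa$ and use the bi-affine property of $\omega$ to write it uniquely in the canonical form $\omega(a,b)=\pi(a,b)+\rho(a)+\sigma(b)+\tau$ with $\pi$ bilinear, $\rho,\sigma$ linear and $\tau\in\FF$. Substituting this decomposition into the affine antisymmetry \eqref{antisym.coc} makes $\rho$, $\sigma$ and $\tau$ cancel and reduces that axiom to $\pi(a,b)+\pi(b,a)=\pi(a,a)+\pi(b,b)$. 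Setting $b=0$ forces $\pi(a,a)=0$, and then (since $\mathrm{char}\,\FF\neq 2$) the full identity gives $\pi(a,b)=-\pi(b,a)$, so $\pi$ is antisymmetric.

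Next I would feed the decomposition together with $\{a,b\}=[a,b]-\delta(a)+\lambda(b)+s$ into the Jacobi-type identity \eqref{Jacobi.coc} and carry out the bookkeeping. The $\rho$-contributions cancel by telescoping in the three pairs $\omega(x,\{y,z\})-\omega(x,\{x,x\})$, and the $\tau$-contributions cancel by the alternating signs. The $\sigma$-contributions, after using $\lambda=\delta+\kappa$, collapse to $\sigma([a,b])+\sigma([b,c])+\sigma([c,a])$. The key step is then a double specialisation of the remaining identity: putting $b=c=0$ extracts $\pi(a,\kappa(a))=0$, which polarises (using antisymmetry of $\pi$) to $\pi(a,\kappa(b))=\pi(\kappa(a),b)$; then setting $c=0$ and using this extracts exactly \eqref{coc.deriv}. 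Substituting \eqref{coc.deriv} back into the residual sum and rewriting $\lambda=\delta+\kappa$ cancels the $\delta$- and $\sigma$-terms, while the three remaining $\kappa$-terms form a cyclically symmetric trilinear expression that equals its own negative by $\pi(\kappa(a),b)=\pi(a,\kappa(b))$, hence vanishes. What survives is exactly the Chevalley--Eilenberg 2-cocycle identity \eqref{2-coc} for $\pi$. The converse is obtained by reading this computation backwards: if $\pi$ is a 2-cocycle then setting $b=a$ in \eqref{coc.deriv} automatically yields $\pi(a,\kappa(a))=0$, and all the identities above combine to confirm \eqref{antisym.coc} and \eqref{Jacobi.coc} regardless of the choice of $\rho$ and $\tau$.

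For the isomorphism \eqref{iso.omega.pi} I would simply write down the evident bijection
\[
\Phi\colon \aaa(\omega)\longrightarrow \aaa(\ggg(\pi);\hat\kappa,\hat\lambda,\hat s), \qquad (a,\alpha)\longmapsto a+\alpha z,
\]
which manifestly intertwines the product affine structure on $\aaa\times\FF$ with the affine structure on $\ggg(\pi)=\ggg\oplus\FF\langle z\rangle$. To verify that $\Phi$ preserves Lie brackets it suffices to expand the right-hand bracket using \eqref{aff.bra} with the data \eqref{data.omega.pi} and the central extension bracket $[a+\alpha z,b+\beta z]=[a,b]+\pi(a,b)z$: the $\ggg$-component reproduces $\{a,b\}$, while the $\FF z$-component collects to $\pi(a,b)+\rho(a)+\sigma(b)+\tau=\omega(a,b)$, matching \eqref{aff.coc.bracket}.

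I expect the main obstacle to be the bookkeeping in the second step, where the Jacobi identity produces a lengthy cyclic sum mixing $\pi$, $\sigma$, $\delta$, $\lambda$ and $\kappa$. The cleanest way to organise it is to extract the auxiliary constraint $\pi(\kappa(a),b)=\pi(a,\kappa(b))$ first via the $b=c=0$ specialisation, after which the remainder reduces transparently to \eqref{coc.deriv} plus \eqref{2-coc}. Once this separation is executed, both the converse implication and the verification of the isomorphism are immediate.
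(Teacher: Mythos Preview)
Your proposal is correct and follows essentially the same route as the paper: decompose the bi-affine $\omega$ at the origin into $\pi,\rho,\sigma,\tau$, read off antisymmetry of $\pi$ from \eqref{antisym.coc}, and then specialise the Jacobi-type identity to extract first $\pi(a,\kappa(a))=0$ and then \eqref{coc.deriv}, leaving the Chevalley--Eilenberg cocycle condition as the residue. The only cosmetic difference is the order of specialisation (you do $b=c=0$ before $c=0$, the paper does $c=0$ then $b=0$), and you spell out the sufficiency and the bracket-preservation of $\Phi$ in more detail than the paper, which simply records that the argument is analogous to that of Theorem~\ref{thm.alg.aff}.
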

\begin{proof}
    The proof is analogous to the proof of Theorem~\ref{thm.alg.aff}. The maps $\pi,\rho,\sigma$ and the element $w$ are uniquely determined by setting
    $$
    \rho(a) =\omega(a,0)-\omega(0,0) ,\quad \sigma(a) = \omega(0,a)- \omega(0,0), \quad  \tau= \omega(0,0),
    $$
    and 
    $$
    \pi(a,b) = \omega(a,b) -\omega(a,0)-\omega(0,b)+\omega(0,0).
    $$
    The map $\pi$ is skew-symmetric if and only if $\omega$ satisfies condition \eqref{antisym.coc}.

    Next,
    $$
    \begin{aligned}
        \omega(a,\{b,c\}) - \omega(a,\{a,a\}) =&\; \pi(a,[b,c])-\pi(a,\delta(b))+\pi(a,\lambda(c)) +\sigma([b,c])  \\
        &- \pi(a,\lambda(a))+ \pi(a,\delta(a))
        -\sigma\lambda(a) +\sigma\lambda(c) +\sigma\delta(a)-\sigma\delta(b).
    \end{aligned}
     $$
     Hence \eqref{Jacobi.coc} is satisfied if and only if
     $$
     \begin{aligned}
         0 =&\; \pi(a,[b,c])-\pi(a,\delta(b))+\pi(a,\lambda(c))+\sigma([b,c]) - \pi(a,\lambda(a))+ \pi(a,\delta(a))\\
        & +\pi(b,[c,a])-\pi(b,\delta(c))+\pi(b,\lambda(a))+\sigma([c,a]) - \pi(b,\lambda(b))+ \pi(b,\delta(b))
        \\
        & +\pi(c,[a,b])-\pi(c,\delta(a))+\pi(c,\lambda(b))+\sigma([a,b]) - \pi(c,\lambda(c))+ \pi(c,\delta(c)).
     \end{aligned}$$
     By step-by-step elimination, setting first $c=0$ and then also $b=0$, we arrive at the necessity of the 2-cocycle condition for $\pi$ and \eqref{coc.deriv}. The sufficiency is clear.

     The isomorphism \eqref{iso.omega.pi} and the structure data \eqref{data.omega.pi} are obtained by comparing the expression for the Lie bracket \eqref{aff.coc.bracket} with the form of $\omega$ in \eqref{omega.pi} taken into account and the general formula for the Lie bracket in a Lie affgebra with the fibre $\ggg(\pi)$ in Theorem~\ref{thm.alg.aff}.
\end{proof}
\begin{remark}\label{rem.coc}
    When evaluated at $b=a$, the condition \eqref{coc.deriv} implies that $\pi(a,\kappa(a)) = 0$, for all $a\in \ggg$. This puts constraints on the possibility of extending $\aaa$  with a given cocycle $\pi$ on $\ggg$. As an illustration of this consider an extension of the two-dimensional Lie algebra $\ggg$ generated by $q$ and $p$ by the cocycle $\pi(q,p) =1$. The resulting Lie algebra $\ggg(\pi)$ is the Heisenberg algebra, generated by $q,p,z$ with brackets
    $$
    [q,p]=z, \qquad [q,z]=[p,z]=0.
    $$
    As discussed in Example~\ref{sec.ex.ab}, any $\kappa,\lambda,s$ give rise to a Lie affgebra $\aaa = \aaa(\ggg;\kappa,\lambda,s)$. On the other hand, only maps $\delta$ and $\lambda$, and in consequence $\kappa$ that are diagonal in the basis $\{q,p\}$  satisfy \eqref{coc.deriv}.
\end{remark}

\begin{example}\label{ex.simple.coc}
    The isomorphisms classes of cocycle extensions of a Lie algebra $\ggg$ are classified by the second cohomology group of $\ggg$, \cite[Theorem~26.2]{CheEil:coh}. In case $\ggg$ is a finite-dimensional simple Lie algebra, this group is trivial by \cite[Theorem~21.1]{CheEil:coh}, and hence -- up to isomorphism -- we can choose $\pi =0$. Since $\sigma$  in Proposition~\ref{prop.coc} must satisfy \eqref{coc.deriv}, $\sigma([a,b]) =0$, for all $a,b\in \ggg$. However, $[\ggg,\ggg] = \ggg$, and so $\sigma=0$. Thus up to coboundaries in the cohomology of $\ggg$, 
    $$
    \omega(a,b) = \rho(a) +\tau.
    $$
    Observing that, $\hat\kappa(z) =0$, one can eliminate $\tau$ from the expression for $\hat s$ in \eqref{data.omega.pi} by setting $q=-\tau z$ in \eqref{ss'}. This will not change $\hat\lambda$ by the centrality of $z$ in $\ggg(\pi)$. We conclude that all extensions of Lie affgebras $\aaa$ with simple finite dimensional fibres $\ggg$ are isomorphic to $\aaa(\ggg(0);\hat\kappa,\hat\lambda, \hat s)$, where
    \begin{equation}\label{isom.simp.ext}
        \hat\kappa(a+\alpha z) = \kappa\, a+\rho(a)z, \quad \hat\lambda (a+\alpha z)= \kappa\, a, \quad \hat s =s,
    \end{equation}
    for some $\kappa\in \FF$, $s\in \ggg$ and a linear functional $\rho$ on $\ggg$. 
\end{example}

\section*{Acknowledgements}
We would like to thank Dietrich Burde and Konstanze Rietsch for helpful discussions and suggestions. 

The research  of Tomasz Brzeziński and Krzysztof Radziszewski is partially supported by the National Science Centre, Poland, through the WEAVE-UNISONO grant no.\ 2023/05/Y/ST1/00046.

\end{document}